\newtheorem{theorem}{Theorem}[section]
\newtheorem{lemma}[theorem]{Lemma}
\newtheorem{corollary}[theorem]{Corollary}
\newtheorem{remark}[theorem]{Remark}
\numberwithin{equation}{section}
\newtheorem{definition}[theorem]{Definition}
\title{\sffamily Lower bounds for the integration error \\
 for multivariate functions with 
mixed smoothness and \\
optimal Fibonacci cubature for functions on the square}
\author{Dinh D\~ung$^a$\footnote{Corresponding author. Email:
dinhzung@gmail.com}, Tino Ullrich$^b$ \\\\
$^a$ Vietnam National University, Hanoi, Information Technology Institute \\
144, Xuan Thuy, Hanoi, Vietnam  \\\\
$^b$Hausdorff-Center for Mathematics and Institute for Numerical Simulation \\
 53115 Bonn, Germany}
\date{\ttfamily January 28, 2014 -- Version 13}
\def\II{{\mathbb I}}
\def\ZZ{{\mathbb Z}}
\def\NN{{\mathbb N}}
\def\RR{{\mathbb R}}
\def\GG{{\mathbb G}}
\def\TT{{\mathbb T}}
\def\IId{{\mathbb I}^d}
\def\RRd{{\mathbb R}^d}
\def\TTd{{\mathbb T}^d}
\def\GGd{{\mathbb G}^d}
\def\ZZdp{{\mathbb Z}^d_+}
\def\Ba{B^\alpha_{p,\theta}}
\def\BaG{B^\alpha_{p,\theta}(\GGd)}
\def\BaT{B^\alpha_{p,\theta}(\TTd)}
\def\BaI{B^\alpha_{p,\theta}(\IId)}
\def\Ua{U^\alpha_{p,\theta}}
\def\UaT{U^\alpha_{p,\theta}(\TTd)}
\def\UaG{U^\alpha_{p,\theta}(\GGd)}
\def\Int{\operatorname{Int}}
\newcommand{\re}{\ensuremath{\mathbb{R}}}\newcommand{\N}{\ensuremath{\mathbb{N}}
}
\newcommand{\zz}{\ensuremath{\mathbb{Z}}}\newcommand{\C}{\ensuremath{\mathbb{C}}
}
\newcommand{\tor}{\ensuremath{\mathbb{T}}}
\newcommand{\Z}{{\ensuremath{\zz}^d}}
\newcommand{\R}{\ensuremath{{\re}^d}}
\newcommand{\cf}{{\mathcal F}}
\begin{document}
\maketitle

\begin{abstract}
We prove lower bounds for the error of optimal cubature formulae for $d$-variate functions from Besov spaces of mixed smoothness $B^{\alpha}_{p,\theta}(\GGd)$ in the case  $0 < p, \theta \le \infty$ and 
$\alpha > 1/p$, where $\GGd$ is either the 
$d$-dimensional torus $\TTd$ or the $d$-dimensional unit cube $\IId$. We prove upper bounds 
for QMC methods of integration on the Fibonacci lattice for bivariate periodic functions from $\Ba(\TT^2)$ in the case $1\leq p \leq \infty$, $0 < \theta \leq \infty$, $\alpha>1/p$. A non-periodic modification of this classical formula yields upper bounds for $\Ba(\II^2)$ if $1/p<\alpha<1+1/p$. In combination these results yield the correct asymptotic error of optimal cubature formulae for functions from $\Ba(\GG^2)$ and indicate that a corresponding result is most likely also true in case $d>2$.  This is compared to the correct asymptotic of optimal cubature formulae on Smolyak grids which results in the observation that any cubature formula on Smolyak grids is never optimal for the general setting.

\medskip
\noindent
{\bf Keywords} Quasi-Monte-Carlo integration; Besov spaces of mixed smoothness; Fibonacci lattice; B-spline representations; Smolyak grids.

\noindent
{\bf Mathematics Subject Classifications (2000)} \ 41A15  $\cdot$  41A05
$\cdot$   41A25  $\cdot$  41A58 $\cdot$  41A63.

\end{abstract}

\section{Introduction}

This paper deals with optimal cubature formulae of functions with mixed smoothness defined either
on the $d$-cube $\IId:=[0,1]^d$ or the $d$-torus $\TTd=[0,1]^d$,
where in each component interval $[0,1]$ the points $0$ and $1$
are identified. Functions defined on $\TTd$ can be also considered as functions on $\RRd$ which are $1$-periodic in
each variable.
A general cubature formula is given by
\begin{equation}\label{1.1}
\Lambda_n(X_n,f) := \sum_{x^j\in X_n} \lambda_j f(x^j)
\end{equation}
and supposed to compute a good approximation of the integral
\begin{equation}
I(f) :=
\int_{[0,1]^d} f(x)\,dx
\end{equation}
 within a reasonable computing time. The discrete set
$X_n=\{x^j\}_{j=1}^n$ of $n$ integration knots in $[0,1]^d$ and the vector of weights $\Lambda_n =
(\lambda_1,...,\lambda_n)$ with the $\lambda_j \in \RR$ are fixed in
advance for a class $F_d$ of $d$-variate functions $f$ on $\GGd$, where
$\GGd$ denotes either $\TTd$ or $\IId$. If the weight sequence is constant $1/n$, i.e., if $\Lambda_n = (1/n,...,1/n)$
then we speak of a quasi-Monte-Carlo method (QMC) and we denote
$$
    I_n(X_n,f):= \Lambda_n(X_n,f)\,.
$$
The worst-case error of an optimal cubature formula with respect to the class $F_d$ is given by
\begin{equation}\label{f62}
  \Int_n(F_d):=\inf\limits_{X_n,\Lambda_n}\sup\limits_{f\in F_d}
|I(f)-\Lambda_n(X_n,f)|\,,\quad n\in \N\,.
\end{equation}
\indent Our main focus lies on integration in Besov-Nikol'skij spaces $B^{\alpha}_{p,\theta}(\GGd)$ of mixed smoothness $\alpha$, where $0 < p,\theta \leq \infty$ and $\alpha>1/p$. Let $\UaG$ denote the unit ball in $B^{\alpha}_{p,\theta}(\GGd)$. The present paper is a continuation
of the second author's work \cite{Ul12_3} where optimal cubature of bivariate functions from 
$U^{\alpha}_{p,\theta}(\tor^2)$ on Hammersley type point sets has been studied. Indeed, here we investigate the asymptotic of the quantity $\Int_n(\UaG)$ where, in contrast to \cite{Ul12_3}, the smoothness $\alpha$ can now be larger or equal to $2$. This by now classical research topic goes back to the work of
Korobov \cite{Ko59}, Hlawka \cite{Hl62}, and Bakhvalov \cite{Ba63} in the 1960s.
In contrast to the quadrature of univariate functions, where equidistant point
grids lead to optimal formulas, the multivariate problem is much more involved.
In fact, the choice of proper sets $X_n \subset \TTd$ of integration knots is
connected with deep problems in number
theory, already for $d=2$.\\
\indent
 Spaces of mixed smoothness have a
long history in the former Soviet Union, see \cite{Am76, Di86, ScTr87, Te93} and the references therein,
and continued attracting significant interest also in the last 5 years
\cite{Vy06, TDiff06,Di11}. Temlyakov \cite{Te91} studied optimal cubature in
the related Sobolev spaces $W^{\alpha}_{p}(\tor^2)$ of mixed smoothness as well
as in Nikol'skij spaces
$B^{\alpha}_{p,\infty}(\tor^2)$ by using formulae based on Fibonacci
numbers (see also \cite[Thm.\ IV.2.6]{Te93}). This highly nontrivial idea goes back to Bakhvalov \cite{Ba63} and indicates once
more the deep connection to number theoretical issues. In the present paper, we extend these results to values
$\theta <\infty$ and prove
the relation
\begin{equation} \label{f00}
      \Int_n(\Ua(\TT^2)) \asymp n^{-\alpha}(\log n)^{(1-1/\theta)_+}\,,\quad n\in \N\,.
\end{equation}
As one would expect, also Fibonacci
quasi-Monte-Carlo methods are optimal and yield the correct asymptotic of $\Int_n(\Ua(\TT^2))$ in \eqref{f00}. Note
that the case $0 < \theta \le 1$ is not excluded and the $\log$-term disappears.
Thus, the optimal integration error decays as quickly as in the univariate
case. In fact, this represents one of the motivations to consider the third index
$\theta$. Note that the Fibonacci cubature formulae so far do not have a proper extension to $d$ dimensions.
Hence, the method in Corollary \ref{Corollary:UpperBound[d=2]} below does not help for general $d>2$. 
For a partial result in case $1/p <\alpha \le 1$ and arbitrary $d$ let us refer to \cite{Ma13_2, Ma13_1, MaDiss12}.\\
\indent Not long ago, Triebel \cite[Chapt.\ 5]{Tr10} proved that if  $1 \leq p,\theta \leq \infty$ and
$1/p < \alpha < 1 + 1/p$, then
\begin{equation}\label{f04}
 n^{-\alpha}(\log n)^{1-1/\theta}\lesssim
\Int_n(\Ua(\II^2)) \lesssim
n^{-\alpha}(\log n)^{\alpha+1-1/\theta}\,,\quad n\in \N\,,
\end{equation}
by using integration knots from Smolyak grids \cite{Sm63}. The gap between upper and lower bounds in
\eqref{f04} has been recently closed by the second named author \cite{Ul12_3}. Let us point out that, although 
we have established the correct asymptotic \eqref{f00} for $\Int_n(\Ua(\TT^2))$ in the periodic setting for all $\alpha>1/p$, 
it is still open for $\Int_n(\Ua(\II^2))$ and large $\alpha \geq 1+1/p$.\\
\indent One of the main contributions of this paper is the lower bound 
\begin{equation} \label{f06}
   \Int_n(\UaG) \ \gtrsim \ n^{-\alpha}(\log n)^{(d-1)(1-1/\theta)_{+}}\,,\quad n\in \N\,,
\end{equation}
for general $d$ and all $\alpha>1/p$. As the main tool we use the $B$-spline representations of functions from Besov spaces with
mixed smoothness based on the first author's work \cite{Di11}. To establish \eqref{f00} we exclusively used the Fourier analytical characterization of bivariate Besov spaces of mixed smoothness in terms of a decomposition of the frequency domain. \\
\indent The results in the present paper \eqref{f00} and \eqref{f06} as well other particular results in 
\cite{Te93}, \cite{Ma13_2, Ma13_1, MaDiss12} lead to the strong
conjecture that
\begin{equation} \label{f05}
   \Int_n(\UaG) \asymp n^{-\alpha}(\log n)^{(d-1)(1-1/\theta)_{+}}\,,\quad n\in \N\,,
\end{equation}
for all $\alpha>1/p$ and all $d>1$. In fact, the main open problem is the upper bound in \eqref{f05} for $d>2$ and $ \alpha>1$. In a special case, namely for $p=\theta=\infty$, $0 <\alpha <1$ and $\GGd = \TTd$, the conjecture \eqref{f05} has been already proved by Temlyakov \cite[Thms.\ IV.1.1 and IV.3.3]{Te93} by showing that the Korobov cubature formulae are optimal. Recently, Markhasin \cite{Ma13_2, Ma13_1, MaDiss12} proved \eqref{f05} in case $1/p<\alpha\leq 1$ for the slightly smaller classes $\Ua(\II^d)^\urcorner$ with vanishing
boundary values on the ``upper'' and ``right'' boundary faces of $\II^d = [0,1]^d$. \\
\indent Moreover, in the present paper we are concerned with the problem of optimal cubature on so-called Smolyak grids \cite{Sm63}, given by 
\begin{equation}\label{eq53}
G^d(m)
:= \bigcup\limits_{k_1+...+k_d \leq m} I_{k_1} \times ... \times I_{k_d}
\end{equation}
where $I_k:=\{2^{-k}\ell:\ell = 0,...,2^k-1\}$. If $\Lambda_m = (\lambda_{\xi})_{\xi\in G^d(m)}$, we consider the cubature formula
$\Lambda^s_m(f) := \Lambda_m(G^d(m),f)$ on Smolyak grids $G^d(m)$ given by
\begin{equation*}
\Lambda^s_m(f)
\ = \
\sum_{\xi \in G^d(m)}  \lambda_{\xi} f(\xi).
\end{equation*}
The quantity of optimal cubature $\Int^s_n(F_d)$ on Smolyak grids $G^d(m)$ is then introduced by
\begin{equation} \label{def:Int^s_n}
\Int^s_n(F_d)
\ := \ \inf_{|G^d(m)| \le n, \, \Lambda_m} \  \sup_{f \in F_d} \, |f - \Lambda^s_m(f)|.
\end{equation}
For $\ 0 < p, \theta \le \infty$ and $\alpha > 1/p$, we obtain the correct asymptotic behavior
\begin{equation} \label{Asymp[Int^s_n(UaG)]}
\Int^s_n(\UaG)
\ \asymp \
n^{-\alpha} (\log n)^{(d-1)(\alpha + (1 - 1/\theta)_+)}\,,\quad n\in \N\,,
\end{equation}
which, in combination with \eqref{f00}, shows that cubature formulae $\Lambda^s_m(f)$ on Smolyak grids $G^d(m)$ can never be optimal for $\Int_n(\Ua(\TT^2))$. The upper bound of \eqref{Asymp[Int^s_n(UaG)]} follows from results on sampling recovery in the $L_1$-norm proved in \cite{Di11}. For surveys and recent results on
sampling recovery on Smolyak grids see, for example, \cite{BG04},
\cite{Di11}, \cite{SU07}, and \cite{SU11}. To obtain the lower bound we construct test functions based on $B$-spline representations of functions from $\BaT$. 
In fact, it turns out that the errors of sampling recovery and numerical integration on Smolyak grids asymptotically coincide. \\
\indent The paper is organized as follows. In Section 2 we introduce the relevant
Besov spaces $\BaG$ and our main tools, their B-spline representation as well as a Fourier analytical characterization of bivariate Besov spaces $\Ba(\TT^2)$ in terms of a dyadic decomposition of the frequency domain. Section 3 deals with
the cubature of bivariate periodic and non-periodic functions from $\Ua(\GG^2)$ on the Fibonacci lattice. In particular, 
we prove the upper bound of \eqref{f00}, whereas in Section 4 we establish the lower bound \eqref{f06} for general $d$ 
and all $\alpha>1/p$. Section 5 is concerned with the relation 
\eqref{Asymp[Int^s_n(UaG)]} as well the asymptotic behavior of the quantity of optimal sampling recovery on Smolyak grids. \\
\indent {\bf Notation.} Let us introduce some common notations which are used in the present paper.
 As usual, $\N$ denotes the natural numbers, $\zz$ the integers and
$\re$ the real numbers. The set $\zz_+$ collects the nonnegative integers, sometimes we also use $\N_0$. 
We denote by $\tor$ the torus represented as the interval $[0,1]$ with identification of the end points.
For a real number $a$ we put $a_+ := \max\{a,0\}$.
The symbol $d$ is always reserved for the dimension in $\Z$, $\R$, $\N^d$, and $\tor^d$.
For $0<p\leq \infty$ and $x\in \R$ we denote $|x|_p = (\sum_{i=1}^d |x_i|^p)^{1/p}$ with the
usual modification in case $p=\infty$. The inner product between two vectors $x,y\in \re^d$ is denoted 
by $x\cdot y$ or $\langle x,y\rangle$. In particular, we have  $|x|_2^2 = x\cdot x = \langle x,x\rangle$.
For a number $n\in \N$ we set $[n] = \{1,..,n\}$.
If $X$ is a (quasi-)Banach space, the norm
of an element $f$ in $X$ will be denoted by $\|f\|_X$. For real numbers $a, b>0$ we use the notation $a \lesssim b$ if it exists a constant $c>0$ (independent of the relevant parameters) such that $a \leq cb$. Finally, $a \asymp b$ means $a\lesssim b$  and $b \lesssim a$.

\section{Besov spaces of mixed smoothness}

Let us define Besov spaces of mixed smoothness $\BaG$, where $\GGd$ denotes either $\TTd$ or $\IId$. 
In order to treat both situations, periodic and non-periodic spaces, simultaneously, we use the classical definition via mixed moduli of smoothness. Later we will add the Fourier analytical characterization for spaces on $\TT^2 $ in terms of a decomposition in frequency domain. Let us first recall the basic concepts. For univariate functions $f:[0,1] \to \C$ the $\ell$th difference operator $\Delta_h^\ell$ is defined by
\begin{equation*}
\Delta_h^\ell(f,x) := \left\{\begin{array}{rcl}
\sum_{j =0}^\ell (-1)^{\ell - j} \binom{\ell}{j} f(x + jh) &:& x+\ell h \in [0,1],\\[1.5ex]
0&:& \mbox{otherwise}\,.
\end{array}\right.
\end{equation*}
Let $e$ be any subset of
$[d]$. For multivariate functions $f:\II^d\to \C$ and $h\in \RR^d$
the mixed $(\ell,e)$th difference operator $\Delta_h^{\ell,e}$ is defined by
\begin{equation*}
\Delta_h^{\ell,e} := \
\prod_{i \in e} \Delta_{h_i}^\ell\quad\mbox{and}\quad \Delta_h^{\ell,\emptyset} =  \operatorname{Id},
\end{equation*}
where $\operatorname{Id}f = f$ and the univariate operator
$\Delta_{h_i}^\ell$ is applied to the univariate function $f$ by considering $f$ as
a function of variable $x_i$ with the other variables kept fixed. In case $d=2$ we slightly simplify the notation and
use $\Delta^{\ell}_{(h_1,h_2)} := \Delta^{\ell,\{1,2\}}_h$, $\Delta^{\ell}_{h_1,1}:=\Delta^{\ell,\{1\}}_h$, and
$\Delta^{\ell}_{h_2,2}:=\Delta^{\ell,\{2\}}_h$\,.

For $0 < p \le \infty$, denote by
$L_p(\GGd)$ the quasi-normed space
of functions on $\GGd$ with finite $p$th integral quasi-norm
$\|\cdot\|_p:= \|\cdot\|_{L_p(\GGd)}$ if $0 < p < \infty$, and sup-norm 
$\|\cdot\|_{\infty}:= \|\cdot\|_{L_\infty(\GGd)}$ if $p = \infty$.
Let
\begin{equation*}
\omega_{\ell}^e(f,t)_p:= \sup_{|h_i| < t_i, i \in
e}\|\Delta_h^{\ell,e}(f)\|_{p}\quad,\quad t \in {\II}^d,
\end{equation*}
be the mixed $(\ell,e)$th modulus of smoothness of $f \in L_p(\GGd)$ (in particular, $\omega_{\ell}^{\emptyset}(f,t)_p = \|f\|_{p}$)\,. Let us turn to the definition of the Besov classes $B^{\alpha}_{p,\theta}(\GGd)$. For $0 <  p, \theta \le \infty$,
$\alpha > 0$ and $\ell > \alpha$
we introduce the quasi-semi-norm
$|f|_{B_{p, \theta}^{\alpha,e}(\GGd)}$ for functions $f \in L_p(\GGd)$ by
\begin{equation*} \label{BesovSeminorm}
|f|_{B_{p, \theta}^{\alpha,e}(\GGd)}:=
\left\{\begin{array}{rcl}
\Big(\int_{{\II}^d} \Big[\prod_{i \in e} t_i^{-\alpha}
\omega_{\ell}^e(f,t)_p \Big]^ \theta \prod_{i \in e} t_i^{-1}dt
\Big)^{1/\theta} &:& \theta <\infty\,,\\[2ex]
\sup_{t \in {\II}^d} \ \prod_{i \in e} t_i^{-\alpha}\omega_{\ell}^e(f,t)_p &:& \theta = \infty\,
\end{array}\right.
\end{equation*}
(in particular, $|f|_{B_{p, \theta}^{\alpha,\emptyset}(\GGd)} =
\|f\|_{p}$).

\begin{definition}  For $0 <  p, \theta \le \infty$ and $0 < \alpha < \ell$ the
Besov space
$\BaG$ is defined as the set of  functions $f \in L_p(\GGd)$
for which the Besov quasi-norm $\|f\|_{\BaG}$ is finite.
The  Besov quasi-norm is defined by
\begin{equation*}
\|f\|_{\BaG}
:= \
 \sum_{e \subset [d]} |f|_{B_{p, \theta}^{\alpha,e}(\GGd)}.
 \end{equation*}
 The space of periodic functions $\BaT$ can be considered as a subspace of
$\BaI$.
\end{definition}

\subsection{B-spline representations on $\IId$}

For a given natural number $r \ge 2$ let  $N$ be the
cardinal B-spline of order $r$ with support $[0,r]$, i.e.,
$$
        N(x) = \underbrace{(\chi \ast \cdots \ast \chi)}_{r-\mbox{fold}}(x)\quad,\quad x\in \re\,,
$$
where $\chi(x)$ denotes the indicator function of the interval $[0,1]$\,.
We define the integer translated dilation $N_{k,s}$ of $N$ by
\begin{equation*}
N_{k,s}(x):= \ N(2^k x - s), \ k \in {\ZZ}_+, \ s \in \ZZ,
\end{equation*}
and the $d$-variate B-spline $N_{k,s}(x)$, $k \in {\ZZ}^d_+, \ s \in {\ZZ}^d$,  by
\begin{equation} \label{def:Mixed[N_{k,s}]}
N_{k,s}(x):=  \ \prod_{i=1}^d N_{k_i, s_i}( x_i)\quad,\quad x\in \R\,.
\end{equation}
Let $J^d(k):= \{s \in \ZZdp: -r < s_j < 2^{k_j}, \ j \in [d]\}$ be the set of
$s$ for which $N_{k,s}$ do not vanish identically on $\IId$, and denote by $\Sigma^d(k)$ the span of the B-splines
$N_{k,s}, \ s \in J^d(k)$.
If $0 < p \le \infty,$
for all $k \in {\ZZ}^d_+$ and all $g \in \Sigma^d(k)$ such that
\begin{equation} \label{def:StabIneq}
g = \sum_{s \in J^d(k)} a_s N_{k,s},
\end{equation}
there is the quasi-norm equivalence
\begin{equation} \label{eq:StabIneq}
 \|g\|_p
\ \asymp \ 2^{- |k|_1/p}\Big(\sum_{s \in J^d(k)}| a_s |^p \Big)^{1/p}.
\end{equation}
with the corresponding change when $p= \infty$.

We extend the notation $x_+:= \max\{0,x\}$ to vectors $x \in \RR^d$ by putting $x_+:= ((x_1)_+, ..., (x_d)_+)$\,.
Furthermore, for a subset $e \subset \{1,...,d\}$ we define the subset $\ZZdp(e) \subset \ZZ^d$ by $\ZZdp(e):= \{s \in \ZZdp: s_i = 0 , \ i \notin e\}$.
For a proof of the following lemma we refer to \cite[Lemma 2.3]{Di11}.

\begin{lemma} \label{Lemma:IneqMixedomega_{r}^e}
Let $0 < p \le \infty, \ 0 < \tau \le \min\{p,1\}, \ \delta = \min \{r, r - 1 +
1/p\}$. If the continuous function $g$ on $\IId$ is represented by the series $g \ = \sum_{k \in {\ZZ}^d_+} \ g_k$
with convergence in $C(\IId)$, where $g_k \in \Sigma_r^d(k)$, then we have for any $\ell \in {\ZZ}^d_+(e)$,
\begin{equation*}
\omega_{r}^e(g,2^{-\ell})_p
\ \le \
C   \Big( \sum_{k\in {\ZZ}^d_+}
 \Big[ 2^{-\delta |(\ell-k)_+|_1} \| g_k\|_p \Big]^\tau \Big)^{1/\tau}\,,
\end{equation*}
whenever the sum on the right-hand side is finite. The constant $C$ is independent of  $g$ and $\ell$\,.
\end{lemma}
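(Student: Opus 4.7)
The plan is to reduce the lemma to a single-atom estimate via the quasi-triangle inequality, and then combine. The key ingredients I would use are: (i) $L_p$ is a $\tau$-quasi-Banach space for $\tau \le \min\{p,1\}$, so that $\|\sum_j f_j\|_p^\tau \le \sum_j \|f_j\|_p^\tau$; (ii) the mixed difference operator $\Delta_h^{r,e}$ is linear and commutes with pointwise limits (which applies here since $g=\sum_k g_k$ converges in $C(\IId)$); and (iii) the B-spline stability \eqref{eq:StabIneq}.

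\textbf{Step 1 (Reduction).} Applying $\Delta_h^{r,e}$ termwise to the series and invoking the $\tau$-triangle inequality yields
\begin{equation*}
\|\Delta_h^{r,e}(g)\|_p^\tau \ \le \ \sum_{k\in\ZZ^d_+} \|\Delta_h^{r,e}(g_k)\|_p^\tau.
\end{equation*}
Taking the supremum over $|h_i|<2^{-\ell_i}$, $i\in e$, and using that the supremum of a sum of nonnegative terms is dominated by the sum of suprema, I get
\begin{equation*}
\omega_{r}^e(g,2^{-\ell})_p^\tau \ \le \ \sum_{k\in\ZZ^d_+} \omega_{r}^e(g_k,2^{-\ell})_p^\tau,
\end{equation*}
so it suffices to prove an atomic bound for each single piece $g_k\in\Sigma^d(k)$.

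\textbf{Step 2 (Atomic estimate).} The heart of the proof is the inequality
\begin{equation*}
\omega_{r}^e(g_k,2^{-\ell})_p \ \lesssim \ 2^{-\delta|(\ell-k)_+|_1}\,\|g_k\|_p, \qquad g_k\in\Sigma^d(k),
\end{equation*}
where the exponent on the right is automatically supported on $e$ since $\ell_i=0$ for $i\notin e$ and $k_i\ge 0$. Because $\Delta_h^{r,e}=\prod_{i\in e}\Delta_{h_i,i}^r$ factorizes across coordinates, an iteration of a one-dimensional argument (applying a Fubini-type slicing of the $L_p$ norm, which is valid for $0<p\le\infty$) reduces the problem to the univariate estimate
\begin{equation*}
\omega_{r}(\varphi,2^{-\ell_i})_p \ \lesssim \ 2^{-\delta(\ell_i-k_i)_+}\,\|\varphi\|_p, \qquad \varphi\in\Sigma^1(k_i),\ i\in e.
\end{equation*}
For $\ell_i\le k_i$ this is trivial from the uniform bound $\omega_r(\varphi,t)_p\le 2^r\|\varphi\|_p$. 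For $\ell_i>k_i$, one exploits that $\varphi$ is a piecewise polynomial of degree $<r$ with breakpoints on the dyadic mesh of scale $2^{-k_i}$: the finite difference $\Delta_{h_i}^r\varphi$ vanishes identically on any interval where $\varphi$ is smooth, so its support is concentrated in at most $O(2^{k_i})$ ``transition'' strips of width $\asymp r|h_i|\lesssim 2^{-\ell_i}$. On each such strip one uses either the $r$-th order polynomial annihilation (giving the clean $2^{-r(\ell_i-k_i)}$) or the $L_p$-cost of the jumps of the $(r-1)$-st derivative (giving $2^{-(r-1)(\ell_i-k_i)}\cdot 2^{k_i/p}\cdot 2^{-\ell_i/p}$), and combines with \eqref{eq:StabIneq}. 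Taking the better of the two yields the correct exponent $\delta=\min\{r,r-1+1/p\}$.

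\textbf{Step 3 (Conclusion).} Inserting the atomic bound into Step 1 gives exactly the claimed inequality. The main obstacle is the sharp univariate estimate with the correct value of $\delta$: the $r-1+1/p$ branch becomes binding in the small $p$ regime, where one must carefully balance the $|h_i|^r$ gain from the $r$-th finite difference on polynomial pieces against the $L_p$-penalty $2^{k_i/p}$ produced by the $O(2^{k_i})$ jump singularities of $\varphi^{(r-1)}$; everything else is bookkeeping and an iterated application of the one-dimensional fact.
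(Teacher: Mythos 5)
The paper does not prove this lemma itself but defers to \cite[Lemma 2.3]{Di11}, and your proposal reproduces exactly the argument used there: $\tau$-subadditivity of $\|\cdot\|_p^{\tau}$ (valid for $\tau\le\min\{p,1\}$) to reduce to a single dyadic block, followed by the classical univariate spline estimate $\omega_r(\varphi,2^{-\ell_i})_p\lesssim 2^{-\delta(\ell_i-k_i)_+}\|\varphi\|_p$ for $\varphi\in\Sigma^1(k_i)$, iterated over the coordinates in $e$ via Fubini slicing and combined with the stability inequality \eqref{eq:StabIneq}. The only slip is in your closing commentary: the branch $r-1+1/p$ of $\delta=\min\{r,r-1+1/p\}$ is the one that binds for $p\ge 1$, not in the small-$p$ regime (for $p<1$ one has $r-1+1/p>r$, so the min equals $r$); this is harmless, since your transition-strip analysis in fact yields the exponent $r-1+1/p$ in all cases, which is at least $\delta$, so the claimed bound follows a fortiori.
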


As a next step, we obtain as a consequence of Lemma \ref{Lemma:IneqMixedomega_{r}^e} the following result. Its proof
is similar to the one in \cite[Theorem 2.1(ii)]{Di11} (see also \cite[Lemma 2.5]{Di13}). The main tool is an application of the discrete Hardy inequality, see \cite[(2.28)--(2.29)]{Di11}.

\begin{lemma} \label{lemma[InverseRepresTheorem]}
Let $\ 0 < p, \theta \le \infty$ and $0 < \alpha < \min\{r, r - 1 + 1/p\}$.
Let further $g$ be a continuous function on $\IId$ which is represented by a series
\begin{equation} \nonumber
g \ = \
\sum_{k \in {\ZZ}^d_+} \sum_{s \in J^d(k)} c_{k,s} N_{k,s}
\end{equation}
with convergence in $C(\IId)$, and the coefficients $c_{k,s}$ satisfy the condition
\begin{equation} \nonumber
B(g):= \
\Big(\sum_{k \in \ZZdp}
2^{\theta(\alpha - 1/p)|k|_1}\Big[\sum_{s \in J^d(k)}| c_{k,s} |^p
\Big]^{\theta/p} \Big)^{1/\theta} \ < \ \infty
\end{equation}
with the change to sup for $\theta = \infty$. Then $g$ belongs the  space $\BaI$ and
\begin{equation} \nonumber
\|g\|_{\BaI}
\ \lesssim \
B(g).
\end{equation}
\end{lemma}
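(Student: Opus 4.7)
The plan is to reduce the claim to a discrete convolution inequality on the B-spline coefficients, following the pattern of \cite[Thm.\ 2.1(ii)]{Di11}. Set $g_k := \sum_{s \in J^d(k)} c_{k,s} N_{k,s} \in \Sigma^d(k)$ and $b_k := 2^{\alpha|k|_1}\|g_k\|_p$, so that by the stability relation \eqref{eq:StabIneq} one has $\|(b_k)\|_{\ell_\theta(\ZZdp)} \asymp B(g)$. The Besov quasi-norm splits as $\|g\|_{\BaI} = \sum_{e\subset [d]} |g|_{B_{p,\theta}^{\alpha,e}(\IId)}$, and I would estimate each of the $2^d$ summands separately. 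For $e = \emptyset$, the seminorm is $\|g\|_p$, which is controlled by the $\tau$-triangle inequality applied to $g = \sum_k g_k$ with $\tau \leq \min\{p,1\}$ and then Hölder in the level index $k$, the geometric factor $2^{-\alpha|k|_1}$ extracted from the definition of $b_k$ absorbing the summation.

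For nonempty $e$, I would first discretize: since $t \mapsto \omega_r^e(g,t)_p$ is nondecreasing in each $t_i$, $i \in e$, one has the standard equivalence
\[
|g|_{B_{p,\theta}^{\alpha,e}(\IId)} \asymp \Bigl(\sum_{\ell \in \ZZdpe} 2^{\alpha\theta|\ell|_1}\,[\omega_r^e(g,2^{-\ell})_p]^\theta\Bigr)^{1/\theta}
\]
(with the obvious supremum for $\theta = \infty$). Fixing $\tau \leq \min\{p,\theta,1\}$ and applying Lemma \ref{Lemma:IneqMixedomega_{r}^e} followed by the normalization of $b_k$, this bound assumes the form
\[
\bigl[2^{\alpha|\ell|_1}\omega_r^e(g,2^{-\ell})_p\bigr]^\tau \;\lesssim\; \sum_{k \in \ZZdp} K(\ell,k)^\tau\, b_k^\tau,\qquad K(\ell,k):=2^{\alpha|\ell|_1 - \delta|(\ell-k)_+|_1 - \alpha|k|_1}.
\]

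Because $\ell_i = 0$ for $i \notin e$ while $k_i \geq 0$, the exponent defining $K$ decouples coordinate-wise and one checks $K(\ell,k) \leq \prod_{i\in e} 2^{-\gamma|\ell_i-k_i|} \prod_{i \notin e} 2^{-\alpha k_i}$ with $\gamma := \min\{\alpha, \delta - \alpha\} > 0$; here $\alpha > 0$ is inherited from $\alpha > 1/p$ and $\delta - \alpha > 0$ from the hypothesis $\alpha < \delta = \min\{r, r-1+1/p\}$. Since $\theta/\tau \geq 1$, the required estimate reduces to showing that the positive kernel $K(\ell,k)^\tau$ induces a bounded operator on $\ell_{\theta/\tau}$, which follows by Schur interpolation once one verifies that both $\sup_\ell \sum_k K(\ell,k)^\tau$ and $\sup_k \sum_{\ell \in \ZZdpe} K(\ell,k)^\tau$ are finite; both are immediate from the coordinate-wise geometric decay. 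This delivers the bound $\bigl\|\bigl(2^{\alpha|\ell|_1}\omega_r^e(g,2^{-\ell})_p\bigr)_\ell\bigr\|_{\ell_\theta} \lesssim \|(b_k)\|_{\ell_\theta} \asymp B(g)$, and summing the $2^d$ contributions yields $\|g\|_{\BaI} \lesssim B(g)$.

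The main obstacle is arranging the kernel factorization cleanly in the directions $i \notin e$, where the mixed modulus provides no smoothing in $\ell$ and the required decay in $k$ must come entirely from the $2^{-\alpha|k|_1}$ factor hidden in the normalization of $b_k$; this is precisely where $\alpha > 0$ enters the coordinate-wise estimate. The case $\theta = \infty$ (and analogously $p = \infty$) is handled by straightforward modification, replacing the relevant $\theta$-sum by a supremum and $\ell_{\theta/\tau}$ by $\ell_\infty$ throughout.
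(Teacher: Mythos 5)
Your proposal is correct and follows essentially the route the paper itself indicates: the paper does not write out a proof but refers to \cite[Thm.\ 2.1(ii)]{Di11}, whose argument is exactly your combination of the stability relation \eqref{eq:StabIneq}, the modulus estimate of Lemma \ref{Lemma:IneqMixedomega_{r}^e}, and a weighted $\ell_{\theta/\tau}$ bound for the resulting kernel --- your Schur-test step is just an explicit rendering of the discrete Hardy inequality cited there. Two cosmetic points: $\alpha>0$ is a direct hypothesis of the lemma (not inherited from $\alpha>1/p$, which is not assumed here), and you should fix $\tau\le\min\{p,\theta,1\}$ uniformly so that the H\"older step in the $e=\emptyset$ term also goes through when $\theta<1$.
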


\subsection{The tensor Faber basis in two dimensions}
Let us collect some facts about the important special case $r=2$ of the cardinal B-spline system. The resulting
system is called ``tensor Faber basis". In this subsection we will mainly focus on a converse statement
to Lemma \ref{lemma[InverseRepresTheorem]} in two dimensions. \\
\indent To simplify notations let us introduce the set $\N_{-1} = \N_0 \cup \{-1\}$. Let further
$D_{-1} := \{0,1\}$ and $D_j := \{0,...,2^j-1\}$ if $j\geq 0$\,. Now we define for $j\in \N_{-1}$ and $m\in D_j$
\begin{equation}
    v_{j,m}(x)=\left\{\begin{array}{lcl}
	   2^{j+1}(x-2^{-j}m)&:&2^{-j}m \leq x \leq 2^{-j}m+2^{-j-1},\\[1.5ex]
	   2^{j+1}(2^{-j}(m+1)-x)&:&2^{-j}m+2^{-j-1}\leq x \leq 2^{-j}(m+1),\\[1.5ex]
	   0&:& \mbox{otherwise}\,.
        \end{array}\right.
\end{equation}
Let now $j = (j_1,j_2) = \N_{-1}^2$, $D_j = D_{j_1} \times D_{j_2}$
and $m = (m_1,m_2) \in D_j$. The bivariate (non-periodic) Faber basis functions result from a tensorization
of the univariate ones, i.e.,
\begin{equation}\label{f19nonp}
    v_{(j_1,j_2),(m_1,m_2)}(x_1,x_2)=\left\{\begin{array}{lcl}
	   v_{m_1}(x_1)v_{m_2}(x_2)&:&j_1=j_2 = -1,\\[1.5ex]
	   v_{m_1}(x_1)v_{j_2,m_2}(x_2)&:&j_1=-1,j_2\in \N_0,\\[1.5ex]
	   v_{j_1,m_1}(x_1)v_{m_2}(x_2)&:&j_1\in \N_0, j_2 = -1,\\[1.5ex]
	   v_{j_1,m_1}(x_1)v_{j_2,m_2}(x_2)&:&j_1,j_2\in \N_0\,,
        \end{array}\right.
\end{equation}
see also \cite[3.2]{Tr10}. For every continuous bivariate function $f\in C(\II^2)$ we have the representation
\begin{equation}\label{eq34}
  f(x) = \sum\limits_{j\in \N^2_{-1}} \sum\limits_{m\in D_j}
  D^2_{j,m}(f)v_{j,m}(x)\,,
\end{equation}
where now
$$
  D^2_{j,k}(f) = \left\{\begin{array}{lcl}
    f(m_1,m_2)&:&j = (-1,-1),\\[1.5ex]
    -\frac{1}{2}\Delta^{2}_{2^{-j_1-1},1}(f,(2^{-j_1}m_1,0))&:& j=(j_1,-1),\\[1.5ex]
    -\frac{1}{2}\Delta^{2}_{2^{-j_2-1},2}(f,(0, 2^{-j_2}m_2))&:& j=(-1,j_2),\\[1.5ex]
\frac{1}{4}\Delta^{2,2}_{(2^{-j_1-1}, 2^{-j_2-2})}(f,(2^{-j_1}m_1,2^{-j_2}m_2))&:& j
= (j_1,j_2)\,.
        \end{array}\right.
$$
The following result states the converse inequality to Lemma \ref{lemma[InverseRepresTheorem]} in the particular situation of the bivariate tensor Faber
basis. For the proof we refer to \cite[Thm.\ 4.1]{Di11} or \cite[Thm.\ 3.16]{Tr10}. Note that in the latter reference the additional stronger restriction $1/p<\alpha<\min\{2, 1+1/p\}$ comes into play. However, it is not needed for this relation.

\begin{lemma} \label{conv} Let $0<p,\theta\leq \infty$ and $1/p<\alpha<2$. Then we have for any 
$f\in B^{\alpha}_{p,\theta}(\II^2)$,
\begin{equation}\label{f66}
    \Big[\sum\limits_{j\in \N_{-1}^2}2^{|j|_1(\alpha-1/p)\theta}\Big(\sum\limits_{k\in
    D_j}|D^2_{j,k}(f)|^p\Big)^{\theta/p} \Big]^{1/\theta}  \lesssim \|f\|_{B^{\alpha}_{p,\theta}(\II^2)}.
\end{equation}
\end{lemma}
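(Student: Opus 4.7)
The plan is to read off the Faber coefficients as mixed second-order differences of $f$ at dyadic grid points, to discretize the resulting $\ell_p$-sums against the mixed modulus of smoothness at the matching scale, and finally to sum the weighted $\theta$-sum against the integral defining the Besov seminorm.

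First, I unpack the definition. For the generic indices $j=(j_1,j_2)\in\N_0^2$ one has
$D^2_{j,m}(f)=\tfrac14\,\Delta^{2,2}_{h_j}(f)(y_{j,m})$,
with $h_j:=(2^{-j_1-1},2^{-j_2-1})$ and $y_{j,m}:=(2^{-j_1}m_1,2^{-j_2}m_2)$. The evaluation points form a rectangular lattice of mesh width $(2^{-j_1},2^{-j_2})$ on $\II^2$ with cardinality $|D_j|\asymp 2^{|j|_1}$. The boundary indices $(j_1,-1)$, $(-1,j_2)$, and $(-1,-1)$ give univariate second differences or mere point evaluations of $f$ at the lattice points and will be treated analogously, contributing to the one-parameter seminorms $|f|_{B_{p,\theta}^{\alpha,\{i\}}(\II^2)}$ and to $\|f\|_p$.

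Second (the key step), I establish the discretization estimate
\begin{equation*}
\Big(\sum_{m\in D_j}|D^2_{j,m}(f)|^p\Big)^{1/p}\lesssim 2^{|j|_1/p}\,\omega_2^{\{1,2\}}(f,h_j)_p,\qquad j\in\N_0^2.
\end{equation*}
For $p=\infty$ this is immediate since each $|D^2_{j,m}(f)|\le\tfrac14\,\omega_2^{\{1,2\}}(f,h_j)_\infty$. For $p<\infty$ the condition $\alpha>1/p$ is decisive: it forces continuity on $\II^2$ so that pointwise evaluations are meaningful, and (via the direct B-spline statement of Lemma~\ref{lemma[InverseRepresTheorem]} together with the local stability \eqref{eq:StabIneq}) it yields a sampling inequality of Nikol'skii type that converts the discrete $\ell_p$-sum of $\Delta^{2,2}_{h_j}(f)$ on a regular lattice of spacing $(2^{-j_1},2^{-j_2})$ into $2^{|j|_1/p}\,\|\Delta^{2,2}_{h_j}(f)\|_p$, which is bounded by $2^{|j|_1/p}\,\omega_2^{\{1,2\}}(f,h_j)_p$ by definition of the modulus.

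Third, I raise the discretization bound to the power $\theta$, multiply by $2^{|j|_1(\alpha-1/p)\theta}$ and sum over $j\in\N_0^2$. The factor $2^{|j|_1/p}$ cancels, leaving
\begin{equation*}
\sum_{j\in\N_0^2}2^{|j|_1\alpha\theta}\,\omega_2^{\{1,2\}}(f,h_j)_p^{\theta},
\end{equation*}
which a standard dyadic-versus-integral comparison (monotonicity of $\omega_2^{\{1,2\}}$ in each component of its step vector together with the substitution $t_i=2^{-j_i-1}$) bounds by $|f|^\theta_{B_{p,\theta}^{\alpha,\{1,2\}}(\II^2)}$. Adding the analogous contributions from the three boundary index patterns produces a sum of the seminorms $|f|_{B_{p,\theta}^{\alpha,e}(\II^2)}$ over $e\subset\{1,2\}$, i.e. $\|f\|_{B^\alpha_{p,\theta}(\II^2)}^\theta$, yielding \eqref{f66}.

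The main obstacle is the discretization estimate in the second step: moving from the discrete $\ell_p$ samples of the mixed difference back to its $L_p$-norm at the cost of the geometric grid factor $2^{|j|_1/p}$. This is exactly the place where $\alpha>1/p$ is used, since without that threshold the pointwise evaluations of $\Delta^{2,2}_{h_j}(f)$ at a lattice of mesh $(2^{-j_1},2^{-j_2})$ cannot be controlled uniformly in $j$ by the $L_p$-norm; the upper bound $\alpha<2$ is simultaneously needed so that the order-$2$ modulus of smoothness actually captures the full regularity of $f$.
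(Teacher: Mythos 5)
The paper does not prove Lemma \ref{conv} at all; it cites \cite[Thm.\ 4.1]{Di11} and \cite[Thm.\ 3.16]{Tr10}. So your attempt has to be judged on its own terms, and it contains a genuine gap: the ``key step'' discretization estimate
\begin{equation*}
\Big(\sum_{m\in D_j}|D^2_{j,m}(f)|^p\Big)^{1/p}\;\lesssim\; 2^{|j|_1/p}\,\omega_2^{\{1,2\}}(f,h_j)_p
\end{equation*}
is false as a same-scale inequality. The function $g:=\Delta^{2,2}_{h_j}(f)$ is just some continuous function, and there is no Nikol'skii/Plancherel--P\'olya type inequality bounding a normalized $\ell_p$-sum of its samples on a lattice of mesh $2^{-j}$ by $\|g\|_p$: such inequalities hold for band-limited functions or for splines on a \emph{matching} grid, neither of which applies here. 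Concretely, let $f$ be a bump of height $1$ supported in a cube of side $\varepsilon\ll 2^{-|j|_\infty}$ centered at a single lattice point $y_{j,m_0}$. Then $|D^2_{j,m_0}(f)|\asymp 1$, while $\|\Delta^{2,2}_{h}(f)\|_p\lesssim \varepsilon^{2/p}$ for every $h$, so the right-hand side of your inequality tends to $0$ as $\varepsilon\to 0$ with the left-hand side bounded below. (Such an $f$ has huge $B^\alpha_{p,\theta}$-norm when $\alpha>1/p$, which is why the \emph{lemma} survives, but your intermediate inequality does not.) Invoking ``$\alpha>1/p$ yields a sampling inequality of Nikol'skii type'' is exactly the point that cannot be waved through.

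The repair is necessarily multi-scale, and this is where $\alpha>1/p$ actually enters. One decomposes $f=\sum_{l}q_l$ into dyadic blocks (spline quasi-interpolants or near-best approximations at level $l$, with $\|q_l\|_p\lesssim \omega_2(f,2^{-l})_p$), applies the genuine Nikol'skii-type sampling inequality to each \emph{spline} block $q_l$ sampled on the level-$j$ lattice, and obtains a bound of the form
\begin{equation*}
2^{-|j|_1/p}\Big(\sum_{m\in D_j}|D^2_{j,m}(f)|^p\Big)^{1/p}
\;\lesssim\;
\Big(\sum_{l\ge j}\big[2^{(|j|_1-|l|_1)/p}\,\omega_2^{\{1,2\}}(f,2^{-l})_p\big]^{\tau}\Big)^{1/\tau},
\end{equation*}
i.e.\ the level-$j$ coefficients are controlled by the moduli at all \emph{finer} scales with a geometrically growing weight $2^{(|l|_1-|j|_1)/p}$ absorbed into the decay of $\omega_2$. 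Summing this against $2^{|j|_1(\alpha-1/p)\theta}$ then requires the discrete Hardy inequality, which converges precisely because $\alpha>1/p$. Your step three (dyadic-versus-integral comparison of the $\theta$-sum with the Besov seminorm) and your treatment of the boundary index patterns are fine, but without the multi-scale coupling the argument does not close.
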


The following lemma is a periodic version of Lemma \ref{lemma[InverseRepresTheorem]} for the tensor Faber basis. For a proof we refer to \cite[Prop.\ 3.6]{Ul12_3}.

\begin{lemma}\label{contvsdisc} Let $0<p,\theta \leq \infty$ and
$1/p<\alpha<1+\min\{1/p,1\}$. Then we have for all $f \in C(\TT^2)$,
$$
    \|f\|_{B^{\alpha}_{p,\theta}(\TT^2)} \lesssim \Big[\sum\limits_{j\in \N_{-1}^2}2^{|j|_1(\alpha-1/p)\theta}\Big(\sum\limits_{k\in
    D_j}|D^2_{j,k}(f)|^p\Big)^{\theta/p} \Big]^{1/\theta}\,
$$
whenever the right-hand side is finite. Moreover, if the right-hand side is finite, we have that $f\in B^{\alpha}_{p,\theta}(\tor^2)$\,.
\end{lemma}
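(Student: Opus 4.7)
The natural approach is to recognise that the bivariate Faber system defined in \eqref{f19nonp} is, after a trivial change of index, a \emph{subsystem} of the cardinal $B$-spline system of order $r=2$, so that Lemma~\ref{lemma[InverseRepresTheorem]} applies directly and the resulting non-periodic Besov bound then transfers to the periodic setting.

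First I would make the identification precise. From $N = \chi \ast \chi$ one checks that for $j \ge 0$ the univariate hat $v_{j,m}$ equals $N_{j+1,\,2m}$, while the boundary functions $v_0(x) = 1-x$ and $v_1(x) = x$ are the restrictions to $[0,1]$ of $N_{0,-1}$ and $N_{0,0}$, respectively. Tensorising yields a bijection
\[
(k_i, s_i) \;=\; \begin{cases} (j_i+1,\, 2m_i) & \text{if } j_i \ge 0,\\ (0,\, m_i-1) & \text{if } j_i = -1,\end{cases}
\]
under which every bivariate Faber function equals a single $N_{k,s}$. Consequently, the Faber representation \eqref{eq34} of $f$ is literally a (sparse) bivariate $B$-spline series $\sum_{k,s} c_{k,s} N_{k,s}$ with $c_{k,s} = D^2_{j,m}(f)$ at the active indices and $c_{k,s} = 0$ elsewhere.

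Next I would apply Lemma~\ref{lemma[InverseRepresTheorem]} with $d=2$ and $r=2$; the admissible range $\alpha < \min\{2, 1+1/p\}$ there coincides with the hypothesis $\alpha < 1 + \min\{1/p, 1\}$. Uniform convergence of the Faber series on $\II^2$, required to invoke the lemma, follows from the finiteness of the right-hand side of the claimed inequality by a standard quasi-triangle/H\"older argument using $\alpha > 1/p$ and $\|v_{j,m}\|_\infty = 1$. Under the identification above, $|k|_1 = |j|_1 + O(1)$ and the active set at level $k$ is in one-to-one correspondence with $D_j$, so the quantity $B(f)$ appearing in Lemma~\ref{lemma[InverseRepresTheorem]} is comparable to the right-hand side of the claim, and thus $\|f\|_{B^{\alpha}_{p,\theta}(\II^2)}$ is bounded by the same expression.

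Finally, since $f$ is continuous and periodic, the remark after the definition identifies $\BaT$ with a subspace of $\BaI$ carrying the inherited quasi-norm, whence $\|f\|_{\BaT} \lesssim \|f\|_{\BaI}$ and the claim follows. The main obstacle is precisely this last transfer: one must verify that the periodic quasi-seminorm of $f$ is indeed controlled by the non-periodic one for continuous periodic $f$, which reduces to comparing truncated and cyclic difference operators near the boundary and is the content of the referenced \cite[Prop.\ 3.6]{Ul12_3}. In comparison, the $B$-spline identification and the application of Lemma~\ref{lemma[InverseRepresTheorem]} are essentially bookkeeping.
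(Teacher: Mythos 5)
Your proposal is correct, and it takes a genuinely different route from the paper, which does not prove the lemma at all but simply cites \cite[Prop.\ 3.6]{Ul12_3}; the proof given there runs through the Fourier-analytic characterization of Lemma \ref{Fourier} (estimating $\|\delta_l(f)\|_p$ against the Faber coefficient blocks via convolution-type inequalities), whereas you stay entirely on the modulus-of-smoothness side. Your key observation --- that $v_{j,m}=N_{j+1,2m}$ for $j\ge 0$ and that the two boundary functions are $N_{0,-1}$ and $N_{0,0}$ restricted to $\II$, so the Faber expansion \eqref{eq34} is a sparse order-$2$ B-spline series with $|k|_1=|j|_1+2$ --- is accurate, the parameter range $\alpha<\min\{2,1+1/p\}$ of Lemma \ref{lemma[InverseRepresTheorem]} with $r=2$ matches the hypothesis exactly, and your uniform-convergence argument (each level-$k$ block has sup-norm $\lesssim 2^{-(\alpha-1/p)|k|_1}B(f)$ by \eqref{eq:StabIneq}, summable since $\alpha>1/p$, with the rectangular partial sums converging to $f$ by continuity) legitimately unlocks that lemma. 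This buys a self-contained proof from the paper's own toolkit and makes transparent why the same mechanism drives both Lemma \ref{lemma[InverseRepresTheorem]} and Lemma \ref{contvsdisc}. Two small points of care: first, the final transfer to the periodic quasi-norm is immediate only under the paper's convention that $\BaT$ is the subspace of periodic functions in $\BaI$ with the inherited quasi-norm; if one instead needs the Littlewood--Paley norm used in Section \ref{Fib}, one must additionally invoke the equivalence of Lemma \ref{Fourier}, so this step is a norm-convention issue rather than the content of \cite[Prop.\ 3.6]{Ul12_3} as you suggest. Second, your identification should note explicitly that the Faber functions occupy only a proper subset of $J^2(k)$ at each level, which is harmless since Lemma \ref{lemma[InverseRepresTheorem]} permits vanishing coefficients.
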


\subsection{Decomposition of the frequency domain}

We consider the Fourier analytical characterization of bivariate Besov spaces
of mixed smoothness. The characterization comes
from a partition of the frequency domain. The following assertions
have counterparts also for $d>2$, see \cite{TDiff06}. Here, we will need it just for $d=2$.

    \begin{definition}\label{cunity} Let $\Phi(\re)$ be defined as the collection of all 
      systems
       $\varphi = \{\varphi_j(x)\}_{j=0}^{\infty} \subset C^{\infty}_0(\re)$
       satisfying
       \begin{description}
       \item(i) $\operatorname{supp}\,\varphi_0 \subset \{x:|x| \leq 2\}$\, ,
       \item(ii) $\operatorname{supp}\,\varphi_j \subset \{x:2^{j-1} \leq |x|
       \leq 2^{j+1}\}\quad,\quad j= 1,2,... ,$
       \item(iii) For all $\ell \in \N_0$ it holds
       $\sup\limits_{x,j}
       2^{j\ell}\, |D^{\ell}\varphi_j(x)| \leq c_{\ell} <\infty$\, ,
       \item(iv) $\sum\limits_{j=0}^{\infty} \varphi_j(x) = 1$ for all
       $x\in \re$.
       \end{description}
    \end{definition}

    \begin{remark}\label{speciald}\rm
      The class $\Phi(\re)$ is not empty. Consider the following example.
      Let $\varphi_0(x)\in C^{\infty}_0(\re)$ be smooth function with $\varphi_0(x) =
1$ on $[-1,1]$ and $\varphi_0(x) = 0$
      if $|x|>2$. For $j>0$ we define
      $$
         \varphi_j(x) = \varphi_0(2^{-j}x)-\varphi_0(2^{-j+1}x).
      $$
      Now it is easy to verify that the system $\varphi =
\{\varphi_j(x)\}_{j=0}^{\infty}$ satisfies (i) - (iv).
    \end{remark}
    \noindent Now we fix a system $\{\varphi_j\}_{j=0}^{\infty} \in \Phi(\re)$.
    For $j = (j_1,j_2) \in \zz^2$ let the building blocks $f_{j}$ be given by
    \begin{equation}\label{f2}
	\delta_j(f)(x) = \sum\limits_{k\in \zz^2}
	\varphi_{j_1}(k_1)\varphi_{j_2}(k_2)\hat{f}(k)e^{i 2\pi k\cdot x}\,,
    \end{equation}
    where we put $f_j = 0$ if $\min\{j_1,j_2\}<0$.
    \begin{lemma}\label{Fourier} Let $0<p,\theta \leq \infty$ and
    $\alpha \in \re$. Then $B^{\alpha}_{p,\theta}(\tor^2)$ is the collection of
all $f\in
    L_p(\tor^2)\cap L_1(\tor^2)$ such that
    \begin{equation}\label{f3}
	\|f|B^{\alpha}_{p,\theta}(\tor^2)\|:=\Big(\sum\limits_{j\in
\N_0^2}
2^{|j|_1\alpha \theta}\|\delta_j(f)\|_p^\theta\Big)^{1/\theta}
    \end{equation}
    is finite (usual modification in case $q=\infty$). Moreover, the
   quasi-norms $\|\cdot\|_{B^{\alpha}_{p,\theta}(\tor^2)}$ and
    $\|\cdot |B^{\alpha}_{p,\theta}(\tor^2)\|$ are equivalent.
    \end{lemma}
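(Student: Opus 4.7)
The plan is to establish the equivalence of the Fourier-analytic quasi-norm \eqref{f3} with the modulus-of-smoothness quasi-norm from the Definition by a standard Littlewood-Paley argument adapted to mixed smoothness in two variables. First I would verify that the right-hand side of \eqref{f3} is independent, up to constants, of the chosen resolution of unity $\varphi\in\Phi(\re)$, so that I may fix the concrete system from Remark \ref{speciald}. Since $\delta_j(f)$ is a trigonometric polynomial whose Fourier support is contained in the dyadic rectangle $\{k\in\zz^2: 2^{j_i-1}\leq |k_i|\leq 2^{j_i+1},\ i=1,2\}$ (with the obvious modification when $j_i=0$), blocks obtained from two different admissible systems differ only by a Fourier multiplier of compact dyadic support. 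For $1\leq p\leq\infty$ this is handled by Mikhlin's multiplier theorem; for $0<p<1$ I would use the Peetre maximal function together with the vector-valued Fefferman-Stein inequality and a Nikol'skij inequality, as in Schmeisser-Triebel.

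For the direction $\|f\|_{B^{\alpha}_{p,\theta}(\tor^2)}\lesssim \|f|B^{\alpha}_{p,\theta}(\tor^2)\|$ I would start from the block decomposition $f=\sum_j \delta_j(f)$ and use the Bernstein-type estimate
\[
\omega_\ell^e(\delta_j(f),t)_p \lesssim \prod_{i\in e}\min\{1,(2^{j_i}t_i)^\ell\}\,\|\delta_j(f)\|_p,
\]
which follows from Bernstein's inequality $\|\partial_i^\ell \delta_j(f)\|_p\lesssim 2^{j_i\ell}\|\delta_j(f)\|_p$ for trigonometric polynomials with the stated spectrum, together with the elementary bound $\omega_\ell^e(g,t)_p\leq 2^{\ell|e|}\|g\|_p$. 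Applying the $\tau$-triangle inequality with $\tau=\min\{1,p,\theta\}$, inserting this bound into the Besov semi-norm, and making the change of variable $t_i=2^{-k_i}$ reduces the estimate to a two-dimensional discrete Hardy inequality in $\ell_\theta$ in the form of \cite[(2.28)-(2.29)]{Di11}; this yields the desired bound by the Fourier-analytic quasi-norm.

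For the converse $\|f|B^{\alpha}_{p,\theta}(\tor^2)\|\lesssim \|f\|_{B^{\alpha}_{p,\theta}(\tor^2)}$ I would exploit that for $j_i\geq 1$ the kernel of $\delta_j$, i.e., the trigonometric polynomial $K_j$ with Fourier coefficients $\varphi_{j_1}(k_1)\varphi_{j_2}(k_2)$, has vanishing moments in each variable because $\varphi_{j_i}$ is supported away from the origin. Rewriting $\delta_j(f)=K_j*f$ as an average of $\ell$-th order differences of $f$ with step sizes of order $2^{-j_i}$ then produces
\[
\|\delta_j(f)\|_p \lesssim \omega_\ell^{\{1,2\}}\!\bigl(f,(2^{-j_1},2^{-j_2})\bigr)_p,
\]
with analogous boundary estimates involving $\omega_\ell^{\{i'\}}$ or $\|f\|_p$ when some $j_i=0$. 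Summing in $\ell_\theta$ with weights $2^{|j|_1\alpha\theta}$ and comparing the resulting dyadic sums with the defining integrals of the Besov semi-norms completes the estimate.

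The main obstacle is the quasi-Banach range $0<p<1$ or $0<\theta<1$: classical Fourier multiplier theorems and the ordinary triangle inequality are unavailable, and one must rely on the Peetre maximal function characterization and the sharpened Nikol'skij inequality for trigonometric polynomials with prescribed spectrum, together with the quasi-Banach form of the discrete Hardy inequality. Once these ingredients are in place, the two-dimensional argument reduces essentially to two successive applications of the univariate Littlewood-Paley characterization and proceeds along well-known lines.
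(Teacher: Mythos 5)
The paper does not actually prove this lemma; it only cites \cite[2.3.4]{ScTr87} and \cite{TDiff06}, and your sketch is precisely the standard Littlewood--Paley argument carried out in those references: independence of the dyadic system via multiplier/maximal-function estimates, the Bernstein-type bound on $\omega_\ell^e(\delta_j(f),t)_p$ combined with the discrete Hardy inequality for one direction, and the vanishing-moment representation of the kernels by $\ell$-th differences (with Peetre maximal functions and Nikol'skij inequalities in the quasi-Banach range) for the converse. The one caveat is that the equivalence with the difference-defined space cannot hold for arbitrary $\alpha\in\re$ as the statement suggests, but requires $(1/p-1)_+<\alpha<\ell$ in each variable --- a restriction your argument implicitly uses and which is harmless here, since the lemma is only applied for $1\le p\le\infty$ and $\alpha>1/p$.
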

\begin{proof} For the bivariate case we refer to \cite[2.3.4]{ScTr87}. See
\cite{TDiff06} for the corresponding characterizations of Besov-Lizorkin-Triebel
spaces with dominating mixed smoothness on $\re^d$ and $\tor^d$.
\end{proof}

\section{Integration on the Fibonacci lattice}
\label{Fib}
In this section we will prove upper bounds for
$\Int_n(U^{\alpha}_{p,\theta}(\GG^2))$ which are realized by Fibonacci cubature formulas. If $\GG = \tor$ we obtain sharp results for all $\alpha>1/p$ whereas we need
the additional condition $1/p<r<1+1/p$ if $\GG = \II$. The restriction to $d=2$ is due the
concept of the Fibonacci lattice rule which so far does not have a proper
extension to $d>2$. The Fibonacci numbers given by
\begin{equation}\label{f31}
   b_0 = b_1 = 1~,\quad b_n = b_{n-1} + b_{n-2}~,\quad n\geq 2\,,
\end{equation}
play the central role in the definition of the associated integration lattice. In the sequel, the
symbol $b_n$ is always reserved for \eqref{f31}. For $n\in
\N$ we are going to study the Fibonacci cubature formula
\begin{equation}\label{f32}
   \Phi_n(f):=I_{b_n}(X_{b_n},f) = \frac{1}{b_n}\sum\limits_{\mu = 0}^{b_n-1}
f(x^{\mu})
\end{equation}
for a function $f\in C(\tor^2)$, where the lattice $X_{b_n}$ is given
by
\begin{equation}\label{eq41}
    X_{b_n} :=
\Big\{x^{\mu} = \Big(\frac{\mu}{b_n},\Big\{\mu\frac{b_{n-1}}{b_n}\Big\}\Big):\mu
= 0,...,b_n-1\Big\}~,\quad n\in \N\,.
\end{equation}
Here, $\{x\}$ denotes the fractional part, i.e., $\{x\} := x-\lfloor x \rfloor$
of the positive real number $x$. Note that $\Phi_n(f)$ represents a special
Korobov type \cite{Ko59} integration formula. The idea to use Fibonacci numbers
goes back to \cite{Ba63} and was later used by Temlyakov \cite{Te91} to study
integration in spaces with mixed smoothness (see also the recent contribution
\cite{BiTeYu11}). We will first focus on periodic functions and extend the results later
to the non-periodic situation.

\subsection{Integration of periodic functions}

We are going to prove the theorem
below which extends Temlyakov's results \cite[Thm.\ IV.2.6]{Te93} on the spaces
$B^{\alpha}_{p,\infty}(\tor^2)$, to the spaces $B^{\alpha}_{p,\theta}(\tor^2)$ with
$0 < \theta \leq \infty$. By using simple
embedding properties, our results below directly imply Temlyakov's earlier
results \cite[Thm.\ IV.2.1]{Te93}, \cite[Thm.\ 1.1]{BiTeYu11} on Sobolev spaces
$W^r_p(\tor^2)$. Let us denote by
$$
R_n(f) := \Phi_n(f)-I(f)
$$
the Fibonacci integration error.

\begin{theorem}\label{mainup} Let {$1\leq p \leq \infty$, $0 < \theta \leq \infty$} and $\alpha > 1/p$.
Then there exists a constant $c>0$ depending only on
$\alpha,p$ and $\theta$ such that
$$
\sup\limits_{f\in
U^{\alpha}_{p,\theta}(\tor^2)} |R_n(f)| \leq c\,b_n^{-\alpha}(\log
b_n)^{(1-1/\theta)_+}~,\quad n\in \N\,.
$$
 \end{theorem}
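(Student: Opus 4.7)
The plan is to follow the Fourier-analytic strategy that Temlyakov used for $B^{\alpha}_{p,\infty}(\tor^2)$ in \cite[Thm.\ IV.2.6]{Te93}, and to refine the summation step so as to produce the correct dependence on the third index $\theta$. First, by Lemma \ref{Fourier}, I would decompose $f = \sum_{j \in \N_0^2} \delta_j(f)$ and introduce the sequence $a_j := 2^{|j|_1 \alpha}\|\delta_j f\|_p$, which satisfies $\|a\|_{\ell_\theta(\N_0^2)} \lesssim \|f\|_{B^{\alpha}_{p,\theta}(\tor^2)} \leq 1$. A direct computation at the Fibonacci nodes shows that the cubature acts as a Fourier multiplier on exponentials,
$$
R_n(e^{2\pi i k \cdot x}) \;=\; \begin{cases} 1 & k \in L^* \setminus \{0\}, \\ 0 & \text{otherwise,} \end{cases}
\qquad
L^* := \{k \in \zz^2 : k_1 + b_{n-1} k_2 \equiv 0 \pmod{b_n}\}.
$$
Together with the classical Fibonacci arithmetic lemma (a consequence of $b_{n-1}/b_n$ being a convergent of the golden ratio: every $k \in L^* \setminus \{0\}$ satisfies $\max(|k_1|,|k_2|) \geq c\,b_n$), this yields $R_n(\delta_j f) = 0$ whenever $\max(j_1, j_2) < N_0 := \lfloor \log_2(c b_n) \rfloor - 1$, so the error reduces to a sum over indices with $\max(j_1,j_2) \geq N_0$.

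For these remaining blocks I would prove the key block-wise estimate
$$
|R_n(\delta_j f)| \;=\; \Big|\sum_{k \in L^* \cap \rho(j) \setminus\{0\}} \hat f(k) \Big|
\;\lesssim\; \bigl(\#(L^* \cap \rho(j))\bigr)^{1/q}\, \|\delta_j f\|_p,
\qquad q := \min(p,2),
$$
where $\rho(j)$ is the dyadic frequency rectangle supporting $\delta_j(f)$. For $1 \leq p \leq 2$ this follows from H\"older together with the Hausdorff--Young inequality $\|\hat g\|_{\ell_{p'}} \leq \|g\|_p$; for $2 \leq p < \infty$ one uses Cauchy--Schwarz and Parseval combined with $L_p \hookrightarrow L_2$; the case $p = \infty$ is treated separately via the trivial bound $|R_n(g)| \leq 2\|g\|_\infty$ and Nikol'skii's inequality. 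The lattice count $\#(L^* \cap \rho(j)) \lesssim \max(1,\, 2^{|j|_1}/b_n)$ follows from $L^*$ being a sublattice of $\zz^2$ of index $b_n$.

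Finally, I would sum these estimates against the Besov norm. Writing $\|\delta_j f\|_p = 2^{-|j|_1 \alpha} a_j$ and splitting the summation into a ``low'' range $\max(j_1,j_2) \geq N_0$ with $|j|_1 \leq \log_2 b_n$ (lattice count $O(1)$) and a ``high'' range $|j|_1 > \log_2 b_n$ (count $\asymp 2^{|j|_1}/b_n$), H\"older's inequality in $j$ with conjugate exponent $\theta'$ turns both contributions into a factor $b_n^{-\alpha}$ times a logarithmic factor. The critical dyadic shell $|j|_1 = m \asymp \log_2 b_n$ contains $m+1 \asymp \log b_n$ indices, and it is exactly this shell that produces the factor $(\log b_n)^{1/\theta'} = (\log b_n)^{1-1/\theta}$ for $\theta > 1$; for $\theta \leq 1$ the embedding $\ell_\theta \hookrightarrow \ell_1$ eliminates the logarithm and yields the correct $(1-1/\theta)_+$ exponent. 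The main obstacle is the block-wise estimate for intermediate values of $p$: a na\"ive $L_\infty$-bound combined with Nikol'skii's inequality loses a factor $b_n^{1/p}$ and would produce the wrong rate $b_n^{-(\alpha - 1/p)}$, so the sparsity of the dual Fibonacci lattice inside high-frequency dyadic blocks really is essential. This is precisely the feature tied to $d=2$ that does not extend in an obvious way.
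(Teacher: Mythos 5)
Your overall architecture is the same as the paper's: the Littlewood--Paley decomposition from Lemma \ref{Fourier}, the identification of $R_n(f)$ with the sum of $\hat f(k)$ over the dual lattice $L(n)\setminus\{0\}$, H\"older in $j$ with exponent $\theta'$, and the observation that the critical shell $|j|_1\asymp\log_2 b_n$ is what produces the factor $(\log b_n)^{1-1/\theta}$. Two points, however, need attention.

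First, the Fibonacci arithmetic lemma is misstated. It is \emph{not} true that every $k\in L(n)\setminus\{0\}$ satisfies $\max(|k_1|,|k_2|)\ge c\,b_n$: the points $(b_{n-2},1)$, $(-b_{n-3},2)$, $(b_{n-4},3),\dots$ all lie in $L(n)$, and continuing this chain one reaches nonzero lattice points with both coordinates of order $\sqrt{b_n}$. The correct statement (Lemma \ref{L(n)_1}, i.e.\ Temlyakov's Lemma IV.2.1) is the hyperbolic-cross condition $\max(1,|k_1|)\cdot\max(1,|k_2|)\ge\gamma b_n$, which kills exactly the blocks with $|j|_1<\log_2 b_n-c$, not those with $\max(j_1,j_2)<N_0$. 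Your final summation --- in particular the claim that the critical shell contains $m+1\asymp\log b_n$ contributing indices --- tacitly uses the correct version, so this is a repairable slip rather than a structural flaw, but as written the reduction step is based on a false premise.

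Second, and this is the genuine gap: your block-wise estimate carries the exponent $1/q$ with $q=\min(p,2)$, so for $2<p<\infty$ Cauchy--Schwarz plus Parseval only give $|R_n(\delta_j f)|\lesssim(2^{|j|_1}/b_n)^{1/2}\|\delta_j f\|_p$. Feeding this into the H\"older step produces the geometric series $\sum_{|j|_1\ge J_n}2^{-|j|_1(\alpha-1/2)\theta'}$, which diverges unless $\alpha>1/2$; since the theorem is claimed for all $\alpha>1/p$ and $1/p<1/2$ when $p>2$, your argument leaves open the range $2<p<\infty$, $1/p<\alpha\le 1/2$. The cardinality of $L(n)\cap\rho(j)$ alone cannot yield the needed exponent $1/p$: by duality the block estimate is $|\langle\delta_j f,\chi_j\rangle|\le\|\delta_j f\|_p\,\|\chi_j\|_{p'}$ with $\chi_j=\sum_{k\in L(n)}v_j(k)e^{2\pi i k\cdot x}$, and for $p'<2$ one must interpolate the counting bound $\|\chi_j\|_\infty\lesssim 2^{|j|_1}/b_n$ against the nontrivial endpoint $\|\chi_j\|_1\lesssim 1$. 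That endpoint is exactly what the paper's Lemma \ref{chis} supplies, using that $L(n)=B_n\zz^2$ with $\det B_n=b_n$, Poisson summation, and a tiling argument to compare $\|\eta_j(B_n^{\ast}\cdot)\|_1$ with $\|\eta_j\|_1$; a generic frequency set of the same cardinality would have $L_1$-norm as large as $(2^{|j|_1}/b_n)^{1/2}$, so this step genuinely exploits the lattice structure and cannot be replaced by a counting argument. Your cases $1\le p\le 2$ (Hausdorff--Young) and $p=\infty$ (trivial bound) are fine.
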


We postpone the proof of this theorem to Subsection \ref{Proof of Theorem ref{mainup}}.

\begin{corollary} \label{Corollary:UpperBound[d=2]}
Let $1\leq p \leq \infty$, $0 < \theta \leq \infty$ and $\alpha > 1/p$.
Then there exists a constant $c>0$ depending only on
$\alpha,p$ and $\theta$ such that
$$
  {\Int}_n(U^{\alpha}_{p,\theta}(\tor^2)) \leq c\,n^{-\alpha}(\log n)^{(1-1/\theta)_+}~,\quad n\in \N\,.
$$
\end{corollary}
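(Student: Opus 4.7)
The plan is to deduce this directly from Theorem~\ref{mainup} by a standard device: relate the arbitrary integer $n$ to the largest Fibonacci number $b_k$ with $b_k\le n$, and apply the theorem at that index $k$.

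First, given $n\in\N$, I would choose the maximal $k$ with $b_k\le n$. The Fibonacci recursion $b_{k+1}=b_k+b_{k-1}$ immediately gives $b_{k+1}<2b_k$, so
$$
n/2 \ <\ b_k\ \le\ n,
$$
whence $b_k\asymp n$ and $\log b_k\asymp \log n$ (the finitely many very small values of $n$ are absorbed into the constant). This is the only arithmetic content in the argument.

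Next, I would take the Fibonacci lattice $X_{b_k}$ from \eqref{eq41} equipped with equal weights $1/b_k$, producing the QMC cubature $\Phi_k$ on $b_k$ knots, and extend it to a legitimate $n$-point cubature $(X_n,\Lambda_n)$ by appending $n-b_k$ arbitrary additional knots in $\tor^2$, each assigned weight zero. The resulting functional coincides pointwise with $\Phi_k$, so Theorem~\ref{mainup} yields
$$
\sup_{f\in U^{\alpha}_{p,\theta}(\tor^2)} |I(f)-\Lambda_n(X_n,f)| \ =\ \sup_{f\in U^{\alpha}_{p,\theta}(\tor^2)} |R_k(f)| \ \le\ c\, b_k^{-\alpha}(\log b_k)^{(1-1/\theta)_+}.
$$
Since $\Int_n(U^{\alpha}_{p,\theta}(\tor^2))$ is the infimum of the left-hand side over admissible $n$-point cubatures, the same bound controls it, and replacing $b_k$ by the comparable quantity $n$ yields the claimed inequality.

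I do not anticipate any real obstacle here: the corollary is a formal consequence of Theorem~\ref{mainup}, and the only non-trivial ingredient is the elementary growth estimate $b_{k+1}\le 2 b_k$, which guarantees that rounding $n$ down to a Fibonacci number costs at most a constant factor. The freedom to pad a cubature formula with zero-weight knots is built into the definition \eqref{f62} of $\Int_n$, so no additional verification is needed.
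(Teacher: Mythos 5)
Your proposal is correct and follows essentially the same route as the paper: round $n$ down to a nearby Fibonacci number, apply Theorem~\ref{mainup} there, and control the loss by the bounded ratio of consecutive Fibonacci numbers (you use the crude bound $b_{k+1}\le 2b_k$ where the paper invokes the convergence of $b_m/b_{m-1}$ to the golden ratio, which is immaterial). The zero-weight padding you spell out is exactly what the paper leaves implicit in the monotonicity step $\Int_n(U)\le \Int_{b_{m-1}}(U)$.
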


\begin{proof} Fix $n\in \N$ and let $m\in \N$ such that $b_{m-1} < n \leq b_m$.
Put $U:=U^{\alpha}_{p,\theta}(\tor^2)$. Clearly, we have by Theorem \ref{mainup}
$$
 \Int_n(U) \leq \Int_{b_{m-1}}(U) \lesssim b_{m-1}^{-\alpha}(\log
b_{m-1})^{(1-1/\theta)_+} \leq n^{-\alpha}(\log n)^{(1-1/\theta)_+}\cdot
\Big(\frac{n}{b_{m-1}}\Big)^{\alpha}\,.
$$
By definition $n/b_{m-1} \leq b_m/b_{m-1}$. It is well-known that
$$
    \lim\limits_{m\to \infty} \frac{b_{m}}{b_{m-1}} = \tau\,,
$$
where $\tau$ represents the inverse Golden Ratio. The proof is
complete.
\end{proof}\\\\
Note that the case $0 < \theta \le 1$ is not excluded here. In this case
we obtain the upper bound $n^{-\alpha}$ without the $\log$ term. Consequently,
optimal cubature for this model of functions behaves like optimal
quadrature for $B^{\alpha}_{p,\theta}(\tor)$. We conjecture the same phenomenon for
$d$-variate functions. This gives one reason to vary the third index $\theta$
in $[0,\infty]$.

\subsection{Proof of Theorem \ref{mainup}} \label{Proof of Theorem ref{mainup}}

Let us divide the proof of Theorem \ref{mainup} into several steps. The first
part of the proof follows Temlyakov \cite[pages 220,221]{Te93}. To begin
with we will consider the integration error $R_n(f)$ for a
trigonometric polynomial $f$ on $\tor^2$. Let $f(x) = \sum_{k\in \zz^2}
\hat{f}(k)e^{2\pi ik\cdot x}$ be the Fourier series of $f$. Then clearly,
$\Phi_n(f) = \sum_{k\in \zz^2} \hat{f}(k) \Phi_n(e^{2\pi i k\cdot})$ and $I(f) =
\hat{f}(0)$. Therefore, we obtain
\begin{equation}\label{f56}
    R_n(f) = \sum\limits_{\substack{k\in \zz^2 \\ k\neq 0}} \hat{f}(k)
\Phi_n(k)~,
\end{equation}
where $\Phi_n(k):= \Phi_n(e^{2\pi i k\cdot})\,,k\in \zz^2$. By definition, we
have that
\begin{equation}\label{f33}
  \Phi_n(k) = \frac{1}{b_n}\sum\limits_{\mu = 0}^{b_n-1} e^{2\pi i
\mu\Big(\frac{k_1+b_{n-1}k_2}{b_n}\Big)}\,,
\end{equation}
and hence
\begin{equation}\label{f34}
    \Phi_n(k) = \left\{\begin{array}{rcl}
	   1&:&k\in L(n)\,,\\
	   0&:& k \notin L(n)\,,
        \end{array}\right.
\end{equation}
where
\begin{equation}\label{f35}
    L(n) = \{k = (k_1,k_2) \in \zz^2:k_1+b_{n-1}k_2 \equiv 0~(\mbox{mod }
b_n)\}\,.
\end{equation}
In fact, by the summation formula for the geometric series, we obtain from
\eqref{f33} that
$$
  \Phi_n(k) = \frac{1}{b_n}\frac{e^{2\pi i (k_1+b_{n-1}k_2)}-1}{e^{2\pi i
\Big(\frac{k_1+b_{n-1}k_2}{b_n}\Big)}-1} = 0
$$
in case $e^{2\pi i
\Big(\frac{k_1+b_{n-1}k_2}{b_n}\Big)} \neq 1$ or, equivalently, $k\notin L(n)$.
If $k\in L(n)$ then \eqref{f33} returns $\Phi_n(k) = 1$. Next we will study
the structure of the set $L(n)\setminus\{0\}$. Let us define the discrete sets
$\Gamma(\eta) \subset \zz^2$ by
$$
  \Gamma(\eta) = \{(k_1,k_2) \in \zz^2:\max\{1,|k_1|\}\cdot \max\{1,|k_2|\}\leq
\eta\}~,\quad \eta>0.
$$
The following two Lemmas are essentially Lemma IV.2.1 and Lemma IV.2.2,
respectively, in \cite{Te93}. They represent useful number theoretic
properties of the set $L(n)$. For the sake of completeness we
provide a detailed proof of Lemma \ref{L(n)_2} below.

\begin{lemma}\label{L(n)_1}
There exists a universal constant $\gamma>0$ such that
for every $n\in \N$,
\begin{equation}\label{f36}
  \Gamma(\gamma b_n) \cap
\big( L(n)\setminus \{0\}\big) = \emptyset\,.
\end{equation}
\end{lemma}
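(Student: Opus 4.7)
The plan is to reduce the statement to a Diophantine approximation property of the inverse golden ratio $\tau := (\sqrt{5}-1)/2$. Any $(k_1,k_2) \in L(n)\setminus\{0\}$ satisfies $k_1 = m b_n - b_{n-1} k_2$ for some $m \in \zz$. I would first dispose of the degenerate cases. If $k_2 = 0$, then $b_n \mid k_1$ and $k_1 \neq 0$, so $|k_1| \geq b_n$. If $k_1 = 0$, the classical Fibonacci identity $b_{n-1}^2 - b_n b_{n-2} = (-1)^n$ yields $\gcd(b_{n-1}, b_n) = 1$, whence $b_n \mid k_2$ and $|k_2| \geq b_n$. In both cases, $\max\{1,|k_1|\}\cdot\max\{1,|k_2|\} \geq b_n$.

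For the generic case $k_1 k_2 \neq 0$, two standard facts enter: (i) by Binet's formula, $|b_{n-1}/b_n - \tau| \leq C_1 b_n^{-2}$ for a universal constant $C_1 > 0$; (ii) since $\tau$ has the periodic continued fraction expansion $[0;1,1,1,\ldots]$, it is badly approximable, i.e., $|q\tau - p| \geq c_1/|q|$ for every $(p,q) \in \zz^2$ with $q \neq 0$ and some universal $c_1 > 0$. Applying the triangle inequality to the identity $|k_1|/b_n = |m - b_{n-1}k_2/b_n|$ together with (i) gives $|\tau k_2 - m| \leq |k_1|/b_n + C_1|k_2|/b_n^2$. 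Combining this with (ii) and multiplying by $|k_2|$ yields the key estimate
$$\frac{|k_1|\,|k_2|}{b_n} \;\geq\; c_1 - C_1\frac{|k_2|^2}{b_n^2}.$$

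It remains to split on the size of $|k_2|$. If $|k_2| \leq \sqrt{c_1/(2C_1)}\,b_n$, the last display directly gives $|k_1|\,|k_2| \geq (c_1/2) b_n$. Otherwise $|k_2| > \sqrt{c_1/(2C_1)}\,b_n$, and since $|k_1| \geq 1$ we still conclude $|k_1|\,|k_2| \geq \sqrt{c_1/(2C_1)}\,b_n$. Setting $\gamma := \min\{c_1/2,\sqrt{c_1/(2C_1)}\}$ finishes the argument. The only non-routine ingredient is (ii); I would cite it from standard continued fraction theory (for $\tau$, with all partial quotients equal to $1$, the bound holds with $c_1 = 1/3$). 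The proof is essentially Temlyakov's, and the main conceptual obstacle is recognizing that the quality of the $b_{n-1}/b_n \to \tau$ approximation is precisely balanced against the bad approximability of $\tau$ itself, producing a product-type bound rather than a sum-type bound.
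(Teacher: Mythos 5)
Your argument is correct: the reduction of membership in $L(n)\setminus\{0\}$ to the Diophantine inequality $c_1 \le |k_1||k_2|/b_n + C_1|k_2|^2/b_n^2$, via the convergent estimate $|b_{n-1}/b_n-\tau|\lesssim b_n^{-2}$ and the bad approximability of $\tau$, together with the two degenerate cases, gives exactly the product bound $\max\{1,|k_1|\}\max\{1,|k_2|\}\gtrsim b_n$ that the lemma asserts. The paper itself offers no proof but only the citation ``See Lemma IV.2.1 in [Te93]'', and your reconstruction is the standard continued-fraction argument underlying that reference, so there is nothing to compare beyond noting that your write-up is a complete and self-contained substitute for the citation.
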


\begin{proof} See Lemma IV.2.1 in \cite{Te93}.
\end{proof}

\begin{lemma}\label{L(n)_2} For every $n\in \N$ the set $L(n)$ can be
represented in the form
\begin{equation}\label{f46}
  L(n) = \Big\{\big(ub_{n-2}-vb_{n-3}, u+2v):u,v \in \zz\Big\}.
\end{equation}
\end{lemma}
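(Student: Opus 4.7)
The plan is to verify the stated identity by interpreting both sides as full-rank sublattices of $\zz^2$ and then comparing them via one inclusion plus equality of indices. The left-hand side $L(n)$ is the kernel of the group homomorphism $\varphi:\zz^2\to\zz/b_n\zz$ defined by $\varphi(k_1,k_2):=k_1+b_{n-1}k_2\pmod{b_n}$, while the right-hand side $M_n:=\{(ub_{n-2}-vb_{n-3},u+2v):u,v\in\zz\}$ is the image of the integer matrix
$$
A:=\begin{pmatrix} b_{n-2} & -b_{n-3}\\ 1 & 2\end{pmatrix}.
$$

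First I would check the inclusion $M_n\subseteq L(n)$ by a direct computation: for any $u,v\in\zz$,
$$
(ub_{n-2}-vb_{n-3})+b_{n-1}(u+2v)=u(b_{n-1}+b_{n-2})+v(2b_{n-1}-b_{n-3}).
$$
By the Fibonacci recursion \eqref{f31} one has $b_{n-1}+b_{n-2}=b_n$ and $2b_{n-1}-b_{n-3}=2(b_{n-2}+b_{n-3})-b_{n-3}=2b_{n-2}+b_{n-3}=b_n$. Hence the expression equals $(u+v)b_n$, which is congruent to $0$ modulo $b_n$, so $M_n\subseteq L(n)$.

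Next I would compute the two lattice indices. Since $\varphi(1,0)=1$ already generates $\zz/b_n\zz$, the map $\varphi$ is surjective, so $[\zz^2:L(n)]=b_n$. On the other hand, $\det A=2b_{n-2}+b_{n-3}=b_n\neq 0$ by the identity just verified, hence $A$ is injective on $\zz^2$ and its image $M_n$ has index $|\det A|=b_n$ in $\zz^2$. Two full-rank sublattices of $\zz^2$ with the same finite index, one contained in the other, must coincide; this forces $M_n=L(n)$.

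I do not expect any serious obstacle, as the whole argument reduces to the single identity $b_n=2b_{n-2}+b_{n-3}$, which follows by applying the Fibonacci recursion twice. A minor comment is in order for the smallest values of $n$, where $b_{n-3}$ should be interpreted through the backward extension of the recursion (for instance $b_{-1}:=0$); for $n$ in the range relevant to the Fibonacci cubature formula \eqref{f32} this causes no difficulty.
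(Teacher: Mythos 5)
Your proof is correct, and for the nontrivial inclusion it takes a genuinely different route from the paper. The easy inclusion $M_n\subseteq L(n)$ is handled identically in both arguments: a direct computation reducing, via the Fibonacci recursion, to $(u+v)b_n\equiv 0 \pmod {b_n}$. For the converse, the paper argues completely explicitly: given $(k_1,k_2)\in L(n)$ with $k_1+b_{n-1}k_2=\ell b_n$, it rewrites $b_{n-1}=b_{n-3}+b_{n-2}$ and $b_n=b_{n-3}+2b_{n-2}$ to solve $k_1=(\ell-k_2)b_{n-3}+(2\ell-k_2)b_{n-2}$, and then reads off $u=2\ell-k_2$, $v=k_2-\ell$. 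You instead avoid solving for $u,v$ altogether by observing that both sides are full-rank sublattices of $\ZZ^2$ of the same index $b_n$ (the kernel of a surjection onto $\ZZ/b_n\ZZ$ on one side, the image of a matrix of determinant $b_n$ on the other), so the inclusion forces equality. Your argument is more conceptual and explains \emph{why} the representation must be exact, at the cost of invoking the standard fact that nested full-rank sublattices of equal finite index coincide; the paper's version is entirely elementary and hands you the explicit inverse map, which is occasionally useful. Your remark on interpreting $b_{n-3}$ for small $n$ via the backward extension of the recursion is a reasonable housekeeping point that the paper glosses over as well; it does not affect the substance.
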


\begin{proof} Let $\tilde{L}(n) =
\big\{(ub_{n-2}-vb_{n-3},u+2v):u,v\in \zz\big\}$. \\\\
{\em Step 1.} We prove $\tilde{L}(n) \subset L(n)$. For $k\in \tilde{L}(n)$ we
have to show that $k_1+b_{n-1}k_2 = \ell b_n$ for some $\ell \in \zz$. Indeed,
$ub_{n-2}-vb_{n-3}+b_{n-1}(u+2v) = ub_n+vb_{n-2}+vb_{n-1} = b_n(u+v)$.\\\\
{\em Step 2.} We prove $L(n) \subset \tilde{L}(n)$. For $k = (k_1,k_2)\in
L(n)$ we have to find $u,v \in \zz$ such that the representation $k_1 =
ub_{n-2}-vb_{n-3}$ and $k_2 = u+2v$ holds true. Indeed, since $k\in L(n)$, we
have that $k_1+b_{n-1}k_2 = k_1 + (b_{n-3}+b_{n-2})k_2 = \ell b_n =
\ell(b_{n-3}+2b_{n-2})$ for some $\ell \in \zz$. The last identity implies
$k_1 = (\ell-k_2)b_{n-3}+(2\ell-k_2)b_{n-2}$. Putting $v = k_2-\ell$ and $u =
2\ell-k_2$ yields the desired representation.
\end{proof}

In the following, we will use a different argument than the one used by
Temlyakov to deal with the case $\theta = \infty$. We will modify the definition
of the functions $\chi_s$ introduced in \cite{Te93} before (2.37) on page 229.
This allows for the an alternative argument in order to incorporate the case
$p=1$ in the proof of Lemma \ref{chis} below. Let us also mention, that the
argument to establish the relation between (2.25) and (2.26) in  \cite{Te93} on
page 226 requires some additional work, see Step 3 of the proof of Lemma
\ref{chis} below.

For $s\in \N_0$ we define the discrete set $\rho(s) = \{k\in \zz: 2^{s-2} \leq
|k| < 2^{s+2}\}$ if $s\in \N$ and $\rho(s) = [-4,4]$ if $s=0$. Accordingly,
let $v_0(\cdot),v(\cdot),v_s(\cdot)$, $s\in \N$, be the piecewise linear
functions given by
\begin{equation}\nonumber
    v_0(t) = \left\{\begin{array}{rcl}
	   1&:& |t| \leq 2\,,\\[1.5ex]
	   -\frac{1}{2}|t|+2&:& 2<|t|\leq 4\\[1.5ex]
	   0&:& \mbox{otherwise}\,,
        \end{array}\right.
\end{equation}
$v(\cdot) = v_0(\cdot)-v_0(8\cdot)$, and $v_s(\cdot) = v(\cdot/2^s)$. Note
that $v_s$ is supported on $\rho_s$. Moreover, $v_0 \equiv 1$ on $[-2,2]$
and $v_s \equiv 1$ on $\{x:2^{s-1}\leq |x| \leq 2^{s+1}\}$. For $j
= (j_1,j_2)\in \N_0^2$ we put
$$
  \rho(j_1,j_2) = \rho(j_1) \times \rho(j_2)\quad\mbox{and} \quad v_j =
v_{j_1}\otimes v_{j_2}\,.
$$
We further
define the associated bivariate trigonometric polynomial
$$\chi_s(x) =
\sum_{k\in L(n)} v_s(k) e^{2\pi i k\cdot x}.
$$
Our next goal is to estimate $\|\chi_s\|_p$ for $1\leq p\leq \infty$.

\begin{lemma}\label{chis} Let $1\leq p\leq \infty$, $s \in \N_0^2$, and $n\in
\N$. Then
there is a constant $c>0$ depending only on $p$ such that
\begin{equation}\label{f48}
   \|\chi_s\|_p \leq c\Big(2^{|s|_1}/b_n\Big)^{1-1/p}\,.
\end{equation}
\end{lemma}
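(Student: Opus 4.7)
The plan is to prove the two endpoint bounds $\|\chi_s\|_1 \lesssim 1$ and $\|\chi_s\|_\infty \lesssim 2^{|s|_1}/b_n$ separately and then invoke the log-convexity of $L_p$-norms
$$\|\chi_s\|_p \leq \|\chi_s\|_1^{1/p}\|\chi_s\|_\infty^{1-1/p},\qquad 1\leq p\leq \infty,$$
to obtain \eqref{f48} for intermediate $p$. The two main tools are the lattice counting provided by Lemmas \ref{L(n)_1} and \ref{L(n)_2} on the one hand, and a kernel representation of $\chi_s$ coming from the orthogonality of the Fibonacci quadrature on the other.

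For the sup-norm bound, since $|v_s(k)|\leq 1$ and $\operatorname{supp} v_s\subset \rho(s_1)\times \rho(s_2)$, one immediately has $\|\chi_s\|_\infty \leq \#(L(n)\cap \rho(s))$. Writing a generic point of $L(n)$ as $(k_1,k_2)=(ub_{n-2}-vb_{n-3},u+2v)$ by Lemma \ref{L(n)_2}, so that $k_1 = k_2 b_{n-2}-vb_n$, the constraint $|k_1|<2^{s_1+2}$ confines $v$ to an interval of length $\lesssim 2^{s_1}/b_n$ for each of the $\lesssim 2^{s_2}$ admissible values of $k_2$; the symmetric count swaps the roles of $k_1$ and $k_2$. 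Combining these two counts with Lemma \ref{L(n)_1}, which kills the remaining additive constant by forcing $\rho(s)$ to contain no nonzero point of $L(n)$ whenever $2^{|s|_1}<\gamma b_n$, yields $\#(L(n)\cap \rho(s))\lesssim 2^{|s|_1}/b_n$.

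For the $L_1$-norm, I would use the Fibonacci orthogonality $b_n^{-1}\sum_{\mu=0}^{b_n-1} e^{2\pi i k\cdot x^\mu}=\mathbf{1}_{L(n)}(k)$, immediate from \eqref{f34}, to rewrite
$$\chi_s(x)=\frac{1}{b_n}\sum_{\mu=0}^{b_n-1} V_s(x+x^\mu),\qquad V_s(y):=V_{s_1}(y_1)V_{s_2}(y_2),$$
where $V_t(y):=\sum_{k\in\zz} v_t(k)e^{2\pi i ky}$ is a univariate trigonometric polynomial. Translation invariance of $\|\cdot\|_1$ then yields $\|\chi_s\|_1 \leq \|V_s\|_1 = \|V_{s_1}\|_{L_1([0,1])}\|V_{s_2}\|_{L_1([0,1])}$. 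Since $v$ is piecewise linear with compact support, its Fourier transform on $\re$ satisfies $|\hat v(\xi)|\lesssim (1+\xi^2)^{-1}$, hence belongs to $L_1(\re)$; Poisson summation gives $V_t(y)=2^t\sum_{m\in\zz}\hat v(2^t(y-m))$ and thus $\|V_t\|_{L_1([0,1])}\leq \|\hat v\|_{L_1(\re)}$ uniformly in $t$, proving $\|\chi_s\|_1\lesssim 1$.

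The hard part will be the $L_1$ bound: the crude estimate $\|V_t\|_1\leq \|V_t\|_\infty\lesssim 2^t$ is useless, and it is precisely the piecewise linear (as opposed to merely characteristic) nature of the cutoff $v$ that makes $\hat v\in L_1(\re)$ and produces a $t$-independent bound on $\|V_t\|_1$. This is the point, as indicated in the paragraph preceding the lemma, where the argument must depart from the one in \cite{Te93}, and it is also what allows the endpoint $p=1$ to be incorporated.
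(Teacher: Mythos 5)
Your overall architecture --- endpoint bounds at $p=1$ and $p=\infty$ followed by $\|\chi_s\|_p\le\|\chi_s\|_1^{1/p}\|\chi_s\|_\infty^{1-1/p}$ --- is exactly that of the paper. Your $L_1$ argument, however, is genuinely different and cleaner: the paper writes $\chi_s(\cdot)=\eta_s(B_n^\ast\cdot)$ with $\eta_s(x)=\sum_k v_s(B_nk)e^{2\pi i k\cdot x}$, bounds $\|\eta_s\|_1\lesssim 1$ by Poisson summation, and then needs a separate covering argument for the parallelogram $B_n^\ast(0,1)^2$ to prove $\|\eta_s(B_n^\ast\cdot)\|_1\lesssim\|\eta_s\|_1$. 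Your identity $\chi_s=b_n^{-1}\sum_\mu V_s(\cdot+x^\mu)$, which is just \eqref{f34} read backwards, dispenses with the matrix substitution and the covering step entirely and reduces everything to the $s$-uniform kernel bound $\|V_t\|_{L_1}\le\|\widehat v\|_{L_1(\re)}$, which you justify correctly via Poisson summation and the decay $|\widehat v(\xi)|\lesssim(1+\xi^2)^{-1}$ of the piecewise linear cutoff. This is a real simplification of the paper's Steps 2 and 3.

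The $L_\infty$ count, as you set it up, has a gap. Row by row you get at most $2^{s_1+3}/b_n+1$ admissible $v$ per value of $k_2$, hence in total $\lesssim 2^{|s|_1}/b_n+\min\{2^{s_1},2^{s_2}\}$ after taking the better of your two symmetric counts. Lemma \ref{L(n)_1} forces $\rho(s)$ to be free of nonzero lattice points only when $2^{|s|_1}\lesssim b_n$; it does not ``kill the additive constant'' in the regime where $2^{|s|_1}\gtrsim b_n$ but $\max\{2^{s_1},2^{s_2}\}\ll b_n$ (for instance $2^{s_1}=2^{s_2}\asymp\sqrt{b_n}$), where your bound degenerates to $\sqrt{b_n}$ while the interpolation step needs $O(1)$. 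The repair stays inside your toolbox: apply Lemma \ref{L(n)_1} to \emph{differences} of lattice points. If two points of $L(n)\cap\rho(s)$ have distinct second coordinates, their difference is a nonzero element of $L(n)$ whose first coordinate has modulus below $2^{s_1+3}$, so its second coordinate exceeds $\gamma b_n 2^{-s_1-3}$; hence the number of occupied rows is $\lesssim 2^{|s|_1}/b_n+1$. Since a single row can contain two points only if $b_n\le 2^{s_1+3}$ (consecutive $k_1$ in a row differ by $b_n$), the total count is $\lesssim 2^{|s|_1}/b_n+1$, with the $+1$ absorbed for the nonzero part by Lemma \ref{L(n)_1} again. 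To be fair, the paper's own Step 1 --- a volume comparison carrying an unexamined factor $\varepsilon^{-2}$ --- is no more rigorous at exactly this point, so your attempt is not worse than the original here, but the step does need the extra separation argument to close.
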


\begin{proof} {\em Step 1.} Observe first by Lemma \ref{L(n)_2} that
\begin{equation}\label{f51}
  \chi_s(x) = \sum\limits_{k\in \zz^2} v_s(B_nk)e^{2\pi \langle B_nk,x\rangle} =
\sum\limits_{k\in \zz^2} v_s(B_nk)e^{2\pi i \langle k,B_n^{\ast}x\rangle}\,,
\end{equation}
where
$$
    B_n = \left(\begin{array}{cc}
                   b_{n-1}&-b_{n-3}\\
                   1&2
                \end{array}\right)\,.
$$
It is obvious that $\det{B_n} = b_n$, which will be important in the sequel.
Clearly, if $\varepsilon>0$ is small enough we obtain
\begin{equation}
 \begin{split}
   \|\chi_s\|_\infty &\leq \sum\limits_{k\in \zz^2} v_s(B_nk) \leq
\sum\limits_{(x,y)\in B_n^{-1}(\rho(s))} 1 \\
&=
\frac{1}{4\varepsilon^2}\int\limits_{(B_n^{-1}(\rho(s)))_{\varepsilon}}d(x,y)
\lesssim \int\limits_{B_n^{-1}(Q(s))} d(x,y) =
\frac{1}{\det{B_n}}\int\limits_{Q(s)} d(u,v)\\
&\asymp \frac{2^{|s|_1}}{b_n}\,.
  \end{split}
\end{equation}
We used the notation $M_\varepsilon := \{z \in \re^2: \exists x\in M
\mbox{ such that } |x-z|_{\infty}<\varepsilon\}$ for a set $M \subset \re^2$ and
$Q(s)= \{x\in \re^2: 2^{s_j-3} \leq |x_j| < 2^{s_j+3}, j=1,2\}$ (modification
in case $s=0$). This proves \eqref{f48} in case $p=\infty$.\\\\
{\em Step 2.} Let us deal with the case $p=1$. By \eqref{f51} we have that
$\chi_s(\cdot) = \eta_s(B_n^{\ast}\cdot)$, where $\eta_s$
is the trigonometric polynomial given by
$$
    \eta_s(x) = \sum\limits_{k\in \zz^2} v_s(B_nk)e^{2\pi i k\cdot x}~,\quad
x\in
\tor^2\,.
$$
By Poisson's summation formula we infer that $\eta_s(\cdot) = \sum_{\ell\in
\zz^2} \cf^{-1}[v_s(B_n\cdot)](\cdot+\ell)$. Consequently,
$$
    \|\eta_s\|_1 = \int\limits_{\tor^2} |\eta_s(x)|\,dx \leq
\sum\limits_{\ell\in\zz^2}
\int\limits_{[0,1]^2}|\cf^{-1}[v_s(B_n\cdot)](x+\ell)|\,dx =
\|\cf^{-1}[v_s(B_n\cdot)]\|_{L_1(\re^2)}\,.
$$
The homogeneity of the Fourier transform implies then
\begin{equation}\label{f52}
\|\eta_s\|_1 =
\|\cf^{-1}v_s\|_{L_1(\re^2)} = \|\cf^{-1} v^s\|_{L_1(\re^2)}\,,
\end{equation}
where the function $v^s$ is one of the four possible tensor products of the
univariate functions $v_0$ and $v$ depending on $s$. Since these functions
are piecewise linear we obtain from \eqref{f52} the
relation $\|\eta_s\|_1 \lesssim 1$.\\\\
{\em Step 3.} It remains to show
$\|\eta_s(B_n^{\ast}\cdot)\|_1 \lesssim \|\eta_s\|_1$ which
implies \eqref{f48} in case $p=1$. In fact,
\begin{equation}\label{f54}
    \int\limits_{\tor^2} |\eta_s(B_n^{\ast} x)|\,dx =
\frac{1}{b_n}\int\limits_{B_n^{\ast}(0,1)^2}|\eta_s(x)|\,dx\,.
\end{equation}
Note that $B_n^{\ast}$ is a $2\times 2$ matrix with integer entries.
Therefore, the set $B_n^{\ast}(0,1)^2$ is a $2$-dimensional parallelogram equipped
with four corner points belonging to $\zz^2$ and $|B_n^{\ast}(0,1)^2| =
|\det{B_n^{\ast}}| =
b_n$. In order to estimate the right-hand side of \eqref{f54} we will cover the
set $B_n^{\ast}(0,1)^2$ by $G = \bigcup_{i=1}^m
(k^i + [0,1]^2)$ with properly chosen integer points $k^i, i=1,...,m$. By
employing the periodicity of $\eta_s$ this yields
\begin{equation}\label{f55}
    \frac{1}{b_n}\int\limits_{B_n^{\ast}(0,1)^2}|\eta_s(x)|\,dx \leq
\frac{m}{b_n} \int\limits_{\tor^2}|\eta_s(x)|\,dx =
\frac{m}{b_n}\|\eta_s\|_1\,.
\end{equation}
Thus, the problem boils down to bounding the number $m$ properly, i.e., by
$cb_n$, where $c$ is a universal constant not depending on $n$. Since,
$B^{\ast}_n(0,1)^2$ is determined by four integer corner points, the length of
each face is at least $\sqrt{2}$ for all $n$. Therefore, independently
of $n$ we need $\ell$ parallel translations $p^i + B^{\ast}_n(0,1)^2$,
$i=1,...,\ell$, where the $p^i$ are integer multiples of the corner points of
$B^{\ast}_n(0,1)^2$ to floor a part $F = \bigcup_{i=1}^{\ell} (p^i +
B^{\ast}_n(0,1)^2$ of the plane $\re^2$ which contains all squares $k+[0,1]^2$
satisfying ($k+[0,1]^2) \cap B_n^{\ast}(0,1)^2 \neq
\emptyset$. By comparing the area we obtain $m \leq |F| = \ell b_n$,
where $\ell$ is universal. Using \eqref{f55} we obtain finally
$\|\eta_s(B_n^{\ast}\cdot)\|_1 \lesssim \|\eta_s\|_1$\,.\\\\
{\em Step 4.} In the previous steps we proved \eqref{f48} in case $p=1$ and
$p=\infty$. What remains is a consequence of the following elementary
estimate. If $1<p<\infty$, then
$$
    \|\chi_s\|_p = \Big(\int\limits_{\tor^2}
|\chi_s(x)|^{p-1}|\chi_s(x)|\,dx\Big)^{1/p} \leq
\|\chi_s\|_\infty^{1-1/p}\cdot \|\chi_s\|_1^{1/p}\,.
$$
The proof is complete.
\end{proof}

Now we are ready to prove the main result, Theorem \ref{mainup}.
From the continuous embedding of $\Ba(\TT^2)$ into $B^\alpha_{p,1}(\TT^2)$ for $0 < \theta <1$, it is enough to prove the theorem for
$1\le \theta \le \infty$.
By \eqref{f56} the integration is given by
$$
    |R_n(f)| = \Big|\sum\limits_{k\in L(n)\setminus \{0\}}
\hat{f}(k)\Big|\,.
$$
For $j\in \N_0^2$ we define $\varphi_j = \varphi_{j_1} \otimes \varphi_{j_2}$,
where $\varphi = \{\varphi_s\}_{s=0}^{\infty}$ is a smooth decomposition of
unity according to Definition \ref{cunity}. By exploiting $\sum_{j\in \N_0^2}
\varphi_j(x) = 1$, $x\in \re^2$, we can rewrite the error as follows
$$
 |R_n(f)| = \Big|\sum\limits_{k\in L(n)\setminus \{0\}} \Big(\sum\limits_{j\in
\zz^2} \varphi_j(k)\Big) \hat{f}(k)\Big| = \Big|\sum\limits_{j\in
\zz^2} \sum\limits_{k\in L(n)\setminus
\{0\}}\varphi_j(k) \hat{f}(k)\Big|\,.
$$
Taking the support of the functions $\varphi_j$ into account, see Definition
\ref{cunity}, we obtain by Lemma \ref{L(n)_1} that there is a constant $c$ such
that  $\sum_{k\in L(n)\setminus \{0\}}\varphi_j(k) \hat{f}(k) = 0$ whenever
$|j|_1 < \log b_n - c$. Furthermore, by using the trigonometric
polynomials $\chi_j$, introduced in Lemma \ref{chis}, we get for $j
\neq 0$ the identity
$$
     \sum\limits_{k\in L(n)\setminus
\{0\}}\varphi_j(k) \hat{f}(k) = \langle \delta_j(f) , \chi_j\rangle\,,
$$
where $\delta_j(f)$ is defined in \eqref{f2}. Indeed, here we use the fact, that
$v_j \equiv 1$ on $\mbox{supp } \varphi_j$.
Hence, we can rewrite the error once again and estimate taking Lemma \ref{chis}
into account
\begin{equation}\label{f60}
  \begin{split}
    |R_n(f)| &= \Big|\sum\limits_{|j|_1 \geq \log b_n - c} \langle
    \delta_j(f), \chi_j\rangle \Big| \leq \sum\limits_{|j|_1 \geq \log b_n -
c}\|\delta_j(f)\|_p\cdot    \|\chi_j\|_{p'}\\
    &\lesssim \sum\limits_{|j|_1 \geq \log b_n -
c}\Big(\frac{2^{|j|_1}}{b_n}\Big)^{1/p}\|\delta_j(f)\|_p
  \end{split}
\end{equation}
with $1/p+1/p'= 1$. Applying H\"older's inequality for $1/\theta + 1/\theta' =
1$ we obtain (see Lemma \ref{Fourier})
\begin{equation}\label{f61}
  \begin{split}
    |R_n(f)| &\lesssim
\|f|B^{\alpha}_{p,\theta}(\tor^2)\|\cdot\Big(\sum\limits_{|j|_1 \geq
J_n}2^{-\alpha
|j|_1\theta'}\big(2^{|j|_1}/b_n)^{\theta'/p}\Big)^{1/\theta'}\\
&\lesssim  b_n^{-1/p} \Big(\sum\limits_{|j|_1
\geq
J_n}2^{-|j|_1(\alpha-1/p)\theta'}\Big)^{
1/\theta'}
\end{split}
\end{equation}
for $f\in U^{\alpha}_{p,\theta}(\tor^2)$, where we put $J_n:=\log b_n-c$. We
decompose the sum on the right-hand side into 3 parts
$$
    \sum\limits_{|j|_1 \geq J_n} \leq  \sum\limits_{\substack{|j|_1 \geq J_n
\\j_i \leq J_n, i=1,2}} + \sum\limits_{\substack{j_1 > J_n \\ j_2\geq 0}} +
\sum\limits_{\substack{j_2>J_n \\ j_1 \geq 0}}\,.
$$
The first sum yields (recall that $\alpha>1/p$)
$$
  \sum\limits_{\substack{|j|_1 \geq J_n
\\j_i \leq J_n, i=1,2}}2^{-|j|_1(\alpha-1/p)\theta'} \lesssim
\sum\limits_{u=J_n}^{\infty} \sum\limits_{j_2 = 0}^{J_n}
2^{-u(\alpha-1/p)\theta'} \lesssim b_n^{-(\alpha-1/p)\theta'}\log b_n\,.
$$
Let us consider the second sum, the third one goes similarly.  We have
$$
      \sum\limits_{\substack{j_1 > J_n \\ j_2\geq
0}}2^{-|j|_1(\alpha-1/p)\theta'} =
\sum\limits_{j_1=J_n}^{\infty}2^{-j_1(\alpha-1/p)\theta'}\sum\limits_{j_2=0}^{
\infty} 2^{-j_2(\alpha-1/p)\theta'} \lesssim b_n^{-(\alpha-1/p)\theta'}.
$$
Putting everything into \eqref{f61} yields finally
$$
    |R_n(f)| \lesssim b_n^{-\alpha}\big(\log b_n\big)^{1/\theta'}  =
b_n^{-\alpha}\big(\log b_n\big)^{1-1/\theta}\,.
$$
Of course, we have to modify the argument slightly in case $\theta = 1$, i.e.,
$\theta' = \infty$. The sum in \eqref{f61} has to be replaced by a supremum.
Then we immediately obtain
$$
    \sup\limits_{|j_1|\geq J_n} 2^{-|j|_1(\alpha-1/p)} \lesssim
b_n^{-(\alpha-1/p)}\,,
$$
which yields
$$
      |R_n(f)| \lesssim b_n^{-\alpha}\,.
$$
Note that we do not have any $\log$-term in this case. The proof is complete.
\hspace*{\fill}\vbox{\hrule height0.6pt\hbox{\vrule height1.3ex%
width0.6pt\hskip0.8ex\vrule width0.6pt}\hrule height0.6pt}

\subsection{Integration of non-periodic functions}

The problem of the optimal numerical integration of non-periodic functions is more involved. The cubature formula
below is a modification of \eqref{f32} involving additional boundary values of the function under consideration.
Let $n\in \N$ and $N = 5b_n-2$ then we put ($X_{b_n}$ is defined in \eqref{eq41})
\begin{equation}\label{eq33}
 \begin{split}
   Q_N(f):= &\frac{1}{b_n}\sum\limits_{(x_i,y_i)\in X_{b_n}}f(x_i,y_i)\\
   &+ \frac{1}{b_n}\sum\limits_{(x_i,y_i)\in X_{b_n}}\Big[\Big(y_i-\frac{1}{2}\Big)\big(f(x_i,0)-f(x_i,1)\big)+
   \Big(x_i-\frac{1}{2}\Big)\big(f(0,y_i)-f(1,y_i)\big)\Big]\\
   &+\Big(\frac{1}{2b_n}-\frac{1}{4}+\frac{1}{b_n}\sum\limits_{(x_i,y_i)\in X_{b_n}}x_iy_i\Big)
   \big(f(0,0)-f(1,0)+f(1,1)-f(0,1)\big)\,.
 \end{split}
\end{equation}
Let us denote by
$$
        R_N(f) := Q_N(f)-I(f)
$$
the cubature error for a non-periodic function $f\in B^{\alpha}_{p,\theta}(\II^2)$ with respect to the method $Q_N$.
The following theorem gives an upper bound for the worst-case cubature error of the method $Q_N$ with respect to the class $U^{\alpha}_{p,\theta}(\II^2)$. 

\begin{theorem} Let $1\leq p,\theta \leq \infty$ and $1/p<\alpha<1+1/p$. Let $b_n$ denote the $n$th Fibonacci number for $n\in \N$, and $N=5b_n-2$. Then we have
\begin{equation}\label{eq51}
        \sup\limits_{f\in U^{\alpha}_{p,\theta}(\II^2)} |R_N(f)| \leq CN^{-\alpha}(\log N)^{(1-1/\theta)_+}\,.
\end{equation}
\end{theorem}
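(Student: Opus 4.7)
The strategy is to reduce the non-periodic cubature problem to the periodic Fibonacci estimate of Theorem~\ref{mainup} by subtracting from $f$ a bilinear-type boundary correction. Set
\begin{equation*}
L(x,y) := x\bigl[f(1,y)-f(0,y)\bigr] + y\bigl[f(x,1)-f(x,0)\bigr] - xy\,f_0,\qquad g := f - L,
\end{equation*}
where $f_0 := f(1,1)-f(1,0)-f(0,1)+f(0,0)$. A short check shows $g(0,y)=g(1,y)$ and $g(x,0)=g(x,1)$, so $g$ extends to a continuous function on $\TT^2$. Using trace estimates for mixed-smoothness Besov spaces (which are meaningful since $\alpha>1/p$) together with the observation that multiplication of a univariate $B^\alpha_{p,\theta}$-function by a polynomial factor in the other variable yields a bivariate mixed-smoothness function of the same order $\alpha$, one obtains
\begin{equation*}
\|g\|_{B^\alpha_{p,\theta}(\TT^2)} \ \lesssim \ \|f\|_{B^\alpha_{p,\theta}(\II^2)}.
\end{equation*}

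The second step is to match $Q_N$ to this decomposition. Let $f_1(y):=f(1,y)-f(0,y)$, $f_2(x):=f(x,1)-f(x,0)$, and let $T_{b_n}$, $I^{1D}$ denote the univariate trapezoidal rule on $\{k/b_n\}_{k=0}^{b_n}$ and the integral on $[0,1]$, respectively. Since the two coordinate projections of the Fibonacci lattice $X_{b_n}$ are exactly the equispaced set $\{0,1/b_n,\dots,(b_n-1)/b_n\}$, a direct expansion of $Q_N(f) - \Phi_n(g) = (Q_N(f)-\Phi_n(f)) + \Phi_n(L)$ shows that all terms involving the products $x_i f_1(y_i)$ and $y_i f_2(x_i)$ cancel, leaving only
\begin{equation*}
Q_N(f)-\Phi_n(g) \ = \ \tfrac{1}{2b_n}\sum_{i=0}^{b_n-1} f_2(i/b_n) + \tfrac{1}{2b_n}\sum_{i=0}^{b_n-1} f_1(i/b_n) + \bigl(\tfrac{1}{2b_n}-\tfrac{1}{4}\bigr) f_0.
\end{equation*}
Combined with $I(L) = \tfrac{1}{2} I^{1D}(f_1) + \tfrac{1}{2} I^{1D}(f_2) - \tfrac{1}{4} f_0$, the identity $R_{b_n}(h) = T_{b_n}(h) - \tfrac{h(1)-h(0)}{2b_n}$ for the rectangle rule $R_{b_n}$, and the computation $f_j(1)-f_j(0)=f_0$, the residual boundary term $f_0/(2b_n)$ cancels exactly, yielding the clean identity
\begin{equation*}
Q_N(f)-I(f) \ = \ \bigl[\Phi_n(g)-I(g)\bigr] + \tfrac{1}{2}\bigl[T_{b_n}(f_2)-I^{1D}(f_2)\bigr] + \tfrac{1}{2}\bigl[T_{b_n}(f_1)-I^{1D}(f_1)\bigr].
\end{equation*}
This is precisely the cancellation for which the particular coefficients in \eqref{eq33} were designed.

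The three pieces are then bounded separately. The bivariate piece satisfies $|\Phi_n(g)-I(g)|\lesssim b_n^{-\alpha}(\log b_n)^{(1-1/\theta)_+}$ by Theorem~\ref{mainup} applied to $g$. For the two univariate pieces, the trapezoidal rule is interpolatory with piecewise linear interpolation error $O(b_n^{-\alpha})$ in $L_1$ for any $h\in B^\alpha_{p,\theta}([0,1])$ with $1/p<\alpha<2$, so $|T_{b_n}(f_j)-I^{1D}(f_j)|\lesssim b_n^{-\alpha}\|f_j\|_{B^\alpha_{p,\theta}([0,1])}$, and the trace inequalities $\|f_j\|_{B^\alpha_{p,\theta}([0,1])} \lesssim \|f\|_{B^\alpha_{p,\theta}(\II^2)}$ transfer this to the original norm. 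Since $N=5b_n-2\asymp b_n$, summing the three contributions gives \eqref{eq51}.

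The main obstacle is establishing $\|g\|_{B^\alpha_{p,\theta}(\TT^2)}\lesssim\|f\|_{B^\alpha_{p,\theta}(\II^2)}$: both restrictions on $\alpha$ enter here. The lower bound $\alpha>1/p$ is needed for the boundary traces to be well-defined pointwise and belong to univariate Besov spaces of order $\alpha$, while the upper bound $\alpha<1+1/p$ is what makes the mere continuity matching on $\partial\II^2$ sufficient for $g$ to lie in the periodic space---a higher smoothness would force us to match derivatives on the boundary, which the simple bilinear correction $L$ does not achieve. Once this norm estimate is in place, the remainder of the argument is a straightforward bookkeeping around Theorem~\ref{mainup}.
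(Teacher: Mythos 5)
Your proof is correct and follows essentially the same route as the paper: periodize $f$ by subtracting boundary-trace corrections, observe that the weights in \eqref{eq33} are designed precisely so that $Q_N$ integrates these corrections via exact univariate equispaced rules, and then invoke Theorem \ref{mainup} for the periodic remainder and a univariate $O(b_n^{-\alpha})$ bound for the traces. The only real difference is packaging: the paper's decomposition \eqref{eq35} is organized through the Faber system so that the periodic part vanishes on $\partial\II^2$, whereas your $g=f-L$ merely matches across the seam, and your key norm estimate $\|g\|_{B^\alpha_{p,\theta}(\TT^2)}\lesssim\|f\|_{B^\alpha_{p,\theta}(\II^2)}$ --- which you correctly identify as the crux and the source of both restrictions on $\alpha$ --- is exactly what Lemmas \ref{conv} and \ref{contvsdisc} supply (the interior Faber coefficients of $g$ and $f$ coincide because $\Delta^{2,2}L=0$, and the boundary coefficients of $g$ are controlled by the traces of $f$), so to make that step rigorous you should route it through those two lemmas rather than through generic trace and pointwise-multiplier arguments, which by themselves only control the non-periodic norm of $g$.
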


\begin{proof} By \eqref{eq34} we can decompose a function $f\in U^{\alpha}_{p,\theta}(\II^2)$ into
\begin{equation}\label{eq35}
  \begin{split}
    f(x,y) = f_0(x,y) &+ (1-y)f_1(x) + yf_2(x)\\
    &+ (1-x)f_3(y) + xf_4(y)\\
    &+ f(0,0)(1-x)(1-y) + f(1,0)x(1-y) + f(0,1)(1-x)y + f(1,1)xy\,,
  \end{split}
\end{equation}
where
$$
   f_0(x,y) = \frac{1}{4}\sum\limits_{(j_1,j_2)\in \N^2_0} \sum\limits_{m\in D_{j_1}\times D_{j_2}}
   \Delta^{2,2}_{(2^{-j_1-1}, 2^{-j_2-2})}(f,(2^{-j_1}m_1,2^{-j_2}m_2))v_{j_1,m_1}(x)v_{j_2,m_2}(y)\,,
$$ and
\begin{equation}
\begin{split}
  f_1(x) &= -\frac{1}{2}\sum\limits_{j\in \N_0} \sum\limits_{m\in D_j}
  \Delta^{2}_{2^{-j-1},1}(f,(2^{-j}m,0))v_{j,m}(x)\,,\\
  f_2(x) &= -\frac{1}{2}\sum\limits_{j\in \N_0} \sum\limits_{m\in D_j}
  \Delta^{2}_{2^{-j-1},1    }(f,(2^{-j}m,1))v_{j,m}(x)\,,\\
  f_3(y) &= -\frac{1}{2}\sum\limits_{j\in \N_0} \sum\limits_{m\in D_j}
  \Delta^{2}_{2^{-j-1},2}(f,(0,2^{-j}m))v_{j,m}(y)\,,\\
  f_4(y) &= -\frac{1}{2}\sum\limits_{j\in \N_0} \sum\limits_{m\in D_j}
  \Delta^{2}_{2^{-j-1},2}(f,(0,2^{-j}m))v_{j,m}(y)\,.
\end{split}
\end{equation}
The functions $f_0,...,f_4$ have vanishing boundary values and, therefore, are periodic functions on 
$\TT^2$. Moreover, Lemmas \ref{conv} and
\ref{contvsdisc} (and its univariate version) imply that $f_0 \in U^{\alpha}_{p,\theta}(\tor^2)$ and 
$f_1,...,f_4 \in U^{\alpha}_{p,\theta}(\tor)$\,.
Note that at this point the condition $1/p<\alpha<1+1/p$ is required.
Applying the cubature formula $Q_N$ to \eqref{eq35} yields
\begin{equation}\label{eq36}
  \begin{split}
    Q_Nf = Q_Nf_0 &+ Q_N[(1-y)f_1(x)] + Q_N[yf_2(x)]\\
    &+ Q_N[(1-x)f_3(y)] + Q_N[xf_4(y)]\\
    &+ f(0,0)Q_N[(1-x)(1-y)] + f(1,0)Q_N[x(1-y)]\\
    &+ f(0,1)Q_N[(1-x)y ]+ f(1,1)Q_N[xy]\,.
  \end{split}
\end{equation}
Taking the definition of $Q_N$ in \eqref{eq33} into account we deduce that
\begin{equation}\label{eq43}
    Q_Nf_0 = \frac{1}{b_n}\sum\limits_{(x_i,y_i)\in X_{b_n}} f(x_i,y_i)
\end{equation}
and
\begin{equation}\label{eq44}
  \begin{split}
    Q_N[(1-y)f_1(x)] &= \frac{1}{b_n} \sum\limits_{(x_i,y_i)\in X_{b_n}}(1-y_i)f_1(x_i) + \frac{1}{b_n}\sum\limits_{(x_i,y_i)\in X_{b_n}}(y_i-1/2)f_1(x_i)\\
    &= \frac{1}{2b_n}\sum\limits_{(x_i,y_i)\in X_{b_n}} f_1(x_i)\,.
  \end{split}
\end{equation}
Analogously, we obtain
\begin{equation}\label{eq45}
  \begin{split}
    &Q_N[yf_2(x)] = \frac{1}{2b_n}\sum\limits_{(x_i,y_i)\in X_{b_n}} f_2(x_i)~,~Q_N[(1-x)f_3(y)] = \frac{1}{2b_n}\sum\limits_{(x_i,y_i)\in X_{b_n}} f_3(y_i)\,,\\
    &Q_N[xf_4(y)] = \frac{1}{2b_n}\sum\limits_{(x_i,y_i)\in X_{b_n}} f_4(y_i)\,.
  \end{split}
\end{equation}
Additionally, we get
\begin{equation}\label{eq42}
  \begin{split}
    f(1,1)Q_N[xy] =& f(1,1)\Big[\Big(\frac{1}{2b_n}-\frac{1}{4}+\frac{1}{b_n}\sum\limits_{(x_i,y_i)\in X_{b_n}}x_iy_i\Big)\\
    &~~~~~~~~~+\frac{1}{b_n}\sum\limits_{(x_i,y_i)\in X_{b_n}}\big(x_iy_i + (1/2-y_i)x_i+(1/2-x_i)y_i\big)\Big]\\
    =&f(1,1)\Big[\frac{1}{2b_n}-\frac{1}{4}+\frac{1}{2b_n}\sum\limits_{(x_i,y_i)\in X_{b_n}}x_i + \frac{1}{2b_n}\sum\limits_{(x_i,y_i)\in X_{b_n}}y_i\Big]\,.
  \end{split}
\end{equation}
It turns out that
\begin{equation}\label{eq38}
   \frac{1}{b_n}\sum\limits_{(x_i,y_i)\in X_{b_n}}x_i = \frac{1}{b_n}\sum\limits_{(x_i,y_i)\in X_{b_n}}y_i = \frac{1}{2}-\frac{1}{2b_n}\,.
\end{equation}
In fact,
$$
   \frac{1}{b_n}\sum\limits_{(x_i,y_i)\in X_{b_n}}x_i = \frac{1}{b_n^2}\sum\limits_{\mu=0}^{b_n-1} \mu = \frac{b_n(b_n-1)}{2b_n^2} = \frac{1}{2}-\frac{1}{2b_n}\,.
$$
Furthermore,
\begin{equation}\label{eq40}
    \frac{1}{b_n}\sum\limits_{(x_i,y_i)\in X_{b_n}}y_i = \frac{1}{b_n}\sum\limits_{\mu=1}^{b_n-1}\Big\{\mu\frac{b_{n-1}}{b_n}\Big\}
    =\frac{1}{b_n}\sum\limits_{\mu=1}^{b_n -1}\Big[\frac{1}{2}-\frac{1}{2\pi i}\sum\limits_{k\in \zz} \frac{e^{2\pi i kx}}{k}\Big]_{x=\mu b_{n-1}/b_n}\,,
\end{equation}
where we used the identity
$$
        x = \frac{1}{2}-\frac{1}{2\pi i}\sum\limits_{k\in \zz} \frac{e^{2\pi ikx}}{k}\quad,\quad x\in \tor\setminus\{0\}\,.
$$
Thus, \eqref{eq40} yields
\begin{equation}\label{eq37}
    \frac{1}{b_n}\sum\limits_{(x_i,y_i)\in X_{b_n}}y_i = \frac{1}{2}-\frac{1}{2b_n} - \lim\limits_{N\to\infty}
    \frac{1}{2\pi i}\sum\limits_{1\leq |k|\leq N}\frac{1}{k}\frac{1}{b_n}\sum\limits_{\mu=1}^{b_n-1}e^{2\pi i k \mu\frac{b_{n-1}}{b_n}}\,.
\end{equation}
Since $b_{n-1}$ and $b_n$ do not have a common divisor we have
$$
    \frac{1}{b_n}\sum\limits_{\mu=1}^{b_n-1}e^{2\pi i k \mu\frac{b_{n-1}}{b_n}} = \left\{\begin{array}{rcl}
        1-\frac{1}{b_n}&:& k/b_n \in \zz\,,\\
        -\frac{1}{b_n}&:&\mbox{otherwise}\,.
    \end{array}\right.
$$
The important thing is that $\frac{1}{b_n}\sum_{\mu=1}^{b_n-1}e^{2\pi i k \mu\frac{b_{n-1}}{b_n}}$ does not depend on $k$. Therefore, the sum on the right-hand side
in \eqref{eq37} vanishes and we obtain \eqref{eq38}\,. Hence, \eqref{eq42} simplifies to
$$
    f(1,1)Q_N[xy] = \frac{1}{4}f(1,1)\,.
$$
In the same way we obtain
\begin{equation}\label{eq46}
  \begin{split}
    &f(0,0)Q_N[(1-x)(1-y)] = \frac{1}{4}f(0,0)\,,\\
    &f(1,0)Q_N[x(1-y)] = \frac{1}{4}f(1,0)\,,\\
    &f(0,1)Q_N[(1-x)y] = \frac{1}{4}f(0,1)\,.
  \end{split}
\end{equation}
Let us now estimate the error $|R_N(f)| = |I(f)f-Q_Nf|$. By triangle inequality we obtain
\begin{equation}\label{eq50}
  \begin{split}
    |I(f)-Q_Nf| \leq& |I(f_0)-Q_N(f_0)|\\
    &+|I[(1-y)f_1(x)]-Q_N[(1-y)f_1(x)]|+|I[yf_2(x)]-Q_N[yf_2(x)]|\\
    &+|I[(1-x)f_3(y)]-Q_N[(1-x)f_3(y)]| + |I[xf_4(y)]-Q_N[xf_4(y)]|\,.
  \end{split}
\end{equation}
Note that the remaining error terms disappear, since by \eqref{eq46} the last four functions in the decomposition \eqref{eq36} are integrated exactly. Since $f_0 \in U^{\alpha}_{p,\theta}(\tor^2)$ we obtain by Theorem \ref{mainup} the bound
$$
   |I(f_0)-Q_N(f_0)| \lesssim b_n^{-\alpha}(\log b_n)^{(1-1/\theta)_+} \lesssim N^{-\alpha} (\log N)^{(1-1/\theta)_+}\,.
$$
Let us now estimate the second summand in \eqref{eq50}. By using \eqref{eq44} and the fact that $f_1 \in U^{\alpha}_{p,\theta}(\tor)$ we see
\begin{equation}\nonumber
    |I[(1-y)f_1(x)]-Q_N[(1-y)f_1(x)]| = \frac{1}{2}\Big|\frac{1}{b_n}\sum\limits_{(x_i,y_i)\in X_{b_n}}f(x_i)-I(f_1)\Big| \lesssim b_n^{-\alpha} \lesssim N^{-\alpha}\,.
\end{equation}
Finally, by using \eqref{eq45} we can estimate the remaining terms in \eqref{eq50} in a similar fashion. Altogether we end up with \eqref{eq51} which concludes the proof. 
\end{proof}

\section{Lower bounds for optimal cubature}
\label{lower}

This section is devoted to lower bounds for the $d$-variate integration
problem. The following theorem represents the main result of this section.

\begin{theorem} \label{Theorem[lower bound]}
Let $\ 0 < p, \theta \le \infty$ and $ \alpha > 1/p$. Then we have
\begin{equation} \nonumber
\Int_n(\UaT)
\ \gtrsim \
n^{-\alpha} \log^{(d-1)(1 - 1/\theta)_+} n.
\end{equation}

\end{theorem}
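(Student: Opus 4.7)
For every cubature rule $(X_n, \Lambda_n)$, the plan is to construct a fooling function $f$ with $\|f\|_{\BaT}\leq C$ (absolute constant) that vanishes at every knot $x^j\in X_n$, so $\Lambda_n(X_n,f)=0$ automatically and $|I(f)-\Lambda_n(X_n,f)|=|I(f)|$; this reduces the theorem to building such an $f$ with $|I(f)|\gtrsim n^{-\alpha}(\log n)^{(d-1)(1-1/\theta)_+}$ and then taking the $\sup$ over $\UaT$ and $\inf$ over all rules. The construction uses tensor-product cardinal $B$-splines $N_{k,s}$ of order $r$ with $\alpha<\min\{r,r-1+1/p\}$, so Lemma~\ref{lemma[InverseRepresTheorem]} applies; restricting to indices with $\operatorname{supp} N_{k,s}\subset(0,1)^d$ identifies the resulting function with its $1$-periodic extension on $\TTd$. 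Set $m:=\lceil\log_2(C_d n)\rceil$ for a sufficiently large $C_d$, so $2^m\asymp n$, and restrict to multi-indices $k\in\NN_0^d$ with $|k|_1=m$ whose components are all bounded below by a fixed absolute constant; by a shift there are still $\asymp m^{d-1}$ such $k$. For any such $k$, each knot $x^j$ lies in at most $r^d$ of the supports of the $N_{k,s}$, and the boundary-touching indices form a small fraction; hence the set
\begin{equation*}
S_k := \bigl\{\, s \in J^d(k) \,:\, N_{k,s}(x^j)=0\ \forall j,\ \operatorname{supp} N_{k,s}\subset(0,1)^d \,\bigr\}
\end{equation*}
satisfies $|S_k|\asymp 2^m$. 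The non-negative bump $g_k:=\sum_{s\in S_k} N_{k,s}$ then vanishes on $X_n$ and has $I(g_k)\asymp |S_k|\cdot 2^{-m}\asymp 1$, since each $N_{k,s}$ contributes $\asymp 2^{-m}$ to the integral.

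For the main range $\theta\geq 1$ I would sum over levels,
\begin{equation*}
f \ := \ \sum_{\substack{k\in\NN_0^d\\|k|_1=m}} g_k,
\end{equation*}
which still vanishes at every knot and satisfies $I(f)\gtrsim m^{d-1}$. The periodic analogue of Lemma~\ref{lemma[InverseRepresTheorem]} applied to the $B$-spline representation of $f$ (coefficients $c_{k,s}=1$ if $|k|_1=m$ and $s\in S_k$, else $0$) then yields
\begin{equation*}
\|f\|_{\BaT}^{\theta} \ \lesssim \ \sum_{|k|_1=m} 2^{\theta(\alpha-1/p)m}\, |S_k|^{\theta/p} \ \lesssim \ m^{d-1}\, 2^{\alpha\theta m},
\end{equation*}
whence $\|f\|_{\BaT}\lesssim m^{(d-1)/\theta}\, 2^{\alpha m}$ (with $\sup$ replacing $\sum$ in the natural way when $\theta=\infty$). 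Normalising, $\tilde f:=f/(C m^{(d-1)/\theta} 2^{\alpha m})\in\UaT$ and
\begin{equation*}
|I(\tilde f)| \ \gtrsim \ m^{(d-1)(1-1/\theta)}\, 2^{-\alpha m} \ \asymp \ n^{-\alpha}(\log n)^{(d-1)(1-1/\theta)},
\end{equation*}
which matches the target exponent since $(d-1)(1-1/\theta)_+=(d-1)(1-1/\theta)$ for $\theta\geq 1$.

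For $\theta<1$ the multi-level sum becomes lossy (the exponent $(d-1)(1-1/\theta)$ is negative) while the target is the log-free bound $n^{-\alpha}$. In this regime I would instead take $f:=g_{k^\ast}$ for a single $k^\ast$ with $|k^\ast|_1=m$, deriving $\|f\|_{\BaT}\lesssim 2^{\alpha m}$ and $I(f)\asymp 1$ by the same reasoning, and conclude $|I(\tilde f)|\gtrsim n^{-\alpha}$; equivalently, the embedding $u\mapsto u(x_1)\otimes 1\otimes\cdots\otimes 1$ places a bounded multiple of $U^\alpha_{p,\theta}(\TT)$ inside $\UaT$ and reduces the claim to the classical univariate lower bound $\Int_n(U^\alpha_{p,\theta}(\TT))\gtrsim n^{-\alpha}$. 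The main technical delicacy throughout is the transfer of Lemma~\ref{lemma[InverseRepresTheorem]} from $\BaI$ to $\BaT$ for interior-supported $B$-splines, which is precisely why one must build the interiority constraint into $S_k$ rather than using arbitrary indices of $J^d(k)$.
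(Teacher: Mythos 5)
Your proposal is correct and follows essentially the same route as the paper: reduce to fooling functions vanishing on the knot set, build them from interior-supported tensor B-splines at levels $|k|_1\asymp\log_2 n$ (summing over the whole hyperplane of levels when $\theta\ge 1$, a single level when $\theta<1$), and normalise via Lemma \ref{lemma[InverseRepresTheorem]}. The only cosmetic difference is that the paper compresses the spline to $g(x)=N(2^\nu x)$ so that each $g_{k,s}$ lives in a single dyadic cell $I_{k,s}$ and the knot-avoidance becomes a disjoint-cell count, whereas you handle the overlapping supports of the raw $N_{k,s}$ by a bounded-overlap argument with $2^m\asymp C_d n$; both work.
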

\begin{proof} Observe that
\begin{equation} \label{[i_n(W)>]}
\Int_n(F_d)
\ \ge \
\inf_{X_n=\{x^j\}_{j=1}^n \subset \TTd} \ \ \sup_{f \in F_d: \ f(x^j) = 0, \
j=1,...,n} \ |I(f)|\,.
\end{equation}
Fix an integer $r \ge 2$ so that $\alpha < \min\{r, r - 1 + 1/p\}$ and let $\nu
\in \NN$ be given by the condition $2^{\nu-1} < r \le 2^{\nu}$.
We define the function $g$ on $\RR$ by
\begin{equation} \nonumber
g(x)
:= \
N(2^\nu x).
\end{equation}
Notice that $g$ vanishes outside the interior of the closed interval $\II$.
Let the univariate functions $g_{k,s}$ on $\II$ defined for $k \in {\ZZ}_+, \ s
\in S^1(k)$, by
\begin{equation} \label{[g_{k,s}(d=1)]}
g_{k,s}(x):= \ g(2^k x - s),
\end{equation}
and the $d$-variate functions $g_{k,s}$ on $\IId$  for $k \in \ZZdp, \ s \in
S^d(k)$, by
\begin{equation} \label{[g_{k,s}(d>1)]}
g_{k,s}(x):=  \ \prod_{i=1}^d g_{k_i, s_i}( x_i),
\ k \in {\ZZ}^d_+, \ s \in {\ZZ}^d,
\end{equation}
where 
\begin{equation} \label{[S^d(k)]}
S^d(k) := \{s \in \ZZdp: 0 \le s_j \le 2^{k_j}-1, \ j \in [d]\}.
\end{equation}
We define the open $d$-cube $I_{k,s} \subset \IId$ for $k \in \ZZdp, \ s \in
S^d(k)$, by
\begin{equation} \label{[I_{k,s}]}
I_{k,s}  := \{x \in \IId: 2^{-k_j}s_j < x_j < 2^{-k_j}(s_j + 1), \ j \in [d]\}.
\end{equation}
It is easy to see that every function $g_{k,s}$ is nonnegative in $\IId$ and
vanishes in
$\IId \setminus I_{k,s} $. Therefore, we can extend $g_{k,s}$ to $\RRd$ so that
the extension is $1$-periodic in each variable. We denote this
$1$-periodic extension  by $\tilde{g}_{k,s}$.\\
Let $n$ be given and and $X_n = \{x^j\}_{j=1}^n$ be an arbitrary set of $n$
points in $\TTd$. Without loss of generality we can assume that $n=2^m$. Since $I_{k,s}
\cap I_{k,s'} = \emptyset$ for $s \not= s'$, and $|S^d(k)| = 2^{|k|_1}$, for
each $k \in \ZZdp$ with $|k|_1 = m + 1$, there is $S_*(k) \subset S^d(k)$ such
that $|S_*(k)| = 2^m$ and
$I_{k,s}  \cap X_n = \emptyset$ for every $s \in S_*(k)$. Consider the following function on $\TTd$
\begin{equation} \nonumber
g^*
:= \
C 2^{-\alpha m} m^{-(d-1)/\theta} \sum_{|k|_1 = m+1}\ \sum_{s \in S_*(k)} \tilde{g}_{k,s}.
\end{equation}
By the equation $\tilde{g}_{k,s}(x) = N_{k+\nu {\bf 1},s}(x), \ x \in \IId$, together with Lemma \ref{lemma[InverseRepresTheorem]} and \eqref{eq:StabIneq} we can verify
that
\begin{equation} \label{[|g^*|_{Ba}]}
\|g^*\|_{\Ba}
\ \asymp \
C,
\end{equation}
and
\begin{equation} \label{[|g^*|_1]}
\|g^*\|_1
\ \asymp \
C 2^{-\alpha m} m^{(d-1)(1 - 1/\theta)}.
\end{equation}
By \eqref{[|g^*|_{Ba}]} we can choose the constant $C$ so that $g^* \in \Ua$.
From the construction and the above properties of the function $g_{k,s}$ and the set
$I_{k,s} $, we have  $g^*(x^j) = 0$ for $j =1,...,n$. Hence, by \eqref{[i_n(W)>]} and
\eqref{[|g^*|_1]} we obtain
\begin{equation} \nonumber
\Int_n(\UaT)
\ \ge \ |I(g^*)|
\ = \
\|g^*\|_1
\ \asymp \
n^{-\alpha} \log^{(d-1)(1 - 1/\theta)} n.
\end{equation}
This proves the theorem for the case $\theta \ge 1$.\\
\indent To prove the theorem for the case $\theta < 1$, we take $k \in \ZZdp$ with $|k|_1 = m + 1$, and consider the function on $\TTd$
\begin{equation} \nonumber
g_k:= \
C' 2^{-\alpha m}  \sum_{s \in S_*(k)} \tilde{g}_{k,s}.
\end{equation}
Similarly to the argument for $g^*$, we can choose the constant $C'$ such that $g_k \in \Ua$ and
\begin{equation} \label{[|g^{(k)}|_1]}
\|g_k\|_1
\ \asymp \
2^{-\alpha m}.
\end{equation}
We have  $g_k(x^j) = 0$ for $j =1,...,n$. Hence, by \eqref{[i_n(W)>]} and
\eqref{[|g^{(k)}|_1]} we obtain
\begin{equation} \nonumber
\Int_n(\UaT)
\ \ge \ |I(g_k)|
\ = \
\|g_k\|_1
\ \asymp \
n^{-\alpha}.
\end{equation}
The proof is complete.
\end{proof}

Let us conclude this section with presenting the correct asymptotical behavior of the optimal cubature error in the bivariate case, i.e., in periodic and non-periodic Besov spaces $B^{\alpha}_{p,\theta}(\GG^2)$ with $\GG = \II,\tor$. From Theorem \ref{Theorem[lower bound]} together with Theorem \ref{mainup} we obtain 
\begin{corollary}\label{cor1} Let  $1\leq p \leq \infty$, $0 < \theta \leq \infty$. Then we have the following. \\
{\em (i)} For $\alpha>1/p$,
$$
 \Int_n(\Ua(\TT^2))
\ \asymp \ n^{-\alpha}(\log n)^{(1-1/\theta)_+}.
$$
{\em (ii)} For $1/p<\alpha<1+1/p$,
$$
 \Int_n(\Ua(\II^2))
\ \asymp \ n^{-\alpha}(\log n)^{(1-1/\theta)_+}.
$$
\end{corollary}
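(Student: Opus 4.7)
The plan is to assemble both statements from three ingredients that have already been proved in the paper, together with a simple embedding argument.

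First, for the periodic claim (i), the upper bound is exactly Corollary \ref{Corollary:UpperBound[d=2]}, which gives
\[
\Int_n(\Ua(\TT^2)) \ \lesssim \ n^{-\alpha}(\log n)^{(1-1/\theta)_+}
\]
for all $1\le p\le \infty$, $0<\theta\le\infty$, $\alpha>1/p$. The matching lower bound is obtained by specializing Theorem \ref{Theorem[lower bound]} to $d=2$, where the exponent $(d-1)(1-1/\theta)_+$ becomes $(1-1/\theta)_+$. This settles (i).

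For the non-periodic claim (ii), the upper bound was established in the preceding theorem on $Q_N$: under the restriction $1/p<\alpha<1+1/p$ (required so that the bivariate and univariate converse Faber estimates of Lemmas \ref{conv} and \ref{contvsdisc} apply to the boundary components $f_0,\dots,f_4$ of the decomposition \eqref{eq35}), one has
\[
\sup_{f\in\Ua(\II^2)} |R_N(f)| \ \lesssim \ N^{-\alpha}(\log N)^{(1-1/\theta)_+}.
\]
Since $Q_N$ uses $N=5b_n-2$ function values, by the same rescaling argument as in Corollary \ref{Corollary:UpperBound[d=2]} (inserting an arbitrary $n$ between two consecutive values $5b_{m-1}-2$ and $5b_m-2$ and using that $b_m/b_{m-1}\to\tau$) this yields $\Int_n(\Ua(\II^2))\lesssim n^{-\alpha}(\log n)^{(1-1/\theta)_+}$.

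The only remaining point is the lower bound in (ii). Here I invoke the embedding mentioned in the definition of $\BaG$: every $1$-periodic element of $\Ba(\II^2)$ belongs to $\Ba(\TT^2)$ with equivalent quasi-norms, so $\Ua(\TT^2)\subset c\,\Ua(\II^2)$ for some absolute constant $c$. Consequently, for any choice of knots $X_n$ and weights $\Lambda_n$,
\[
\sup_{f\in \Ua(\II^2)} |I(f)-\Lambda_n(X_n,f)| \ \ge \ c^{-1}\sup_{f\in \Ua(\TT^2)} |I(f)-\Lambda_n(X_n,f)|,
\]
and taking the infimum over $(X_n,\Lambda_n)$ gives $\Int_n(\Ua(\II^2))\gtrsim \Int_n(\Ua(\TT^2))$. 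Combined with the lower bound already proved in (i), this closes the gap and establishes (ii). No step here is an obstacle; the content of the corollary is essentially a packaging of Theorems \ref{mainup}, the non-periodic $Q_N$ theorem, and Theorem \ref{Theorem[lower bound]}, plus the standard periodic-to-non-periodic embedding.
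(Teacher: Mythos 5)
Your proposal is correct and follows essentially the same route as the paper, which simply assembles Corollary \ref{Corollary:UpperBound[d=2]} (resp.\ the $Q_N$ theorem with the Fibonacci rescaling) for the upper bounds and Theorem \ref{Theorem[lower bound]} with $d=2$ for the lower bounds. Your explicit transfer of the lower bound to $\Ua(\II^2)$ via the inclusion $\Ua(\TT^2)\subset c\,\Ua(\II^2)$ is exactly the step the paper leaves implicit (its test functions are B-splines supported in the interior of the cube, so they lie in both classes), and it is a valid way to close that gap.
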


\begin{remark}\label{Markh1} \rm Note that the so far best known upper bound for $\Int_n(\Ua(\TT^2))$ was restricted to $\alpha < 2$, see \cite[Thm.\ 4.7]{Ul12_3}. Corollary \ref{cor1} shows in addition that the lower bound in Theorem \ref{Theorem[lower bound]} is sharp in case $d=2$. We conjecture that this is also the case if $d>2$. In fact, Markhasin's results \cite{Ma13_2,Ma13_1,MaDiss12} in combination with Theorem 
\ref{Theorem[lower bound]} verify this conjecture in case of the smoothness $\alpha$ being less or equal to $1$. What happens in case $\alpha>1$ and $d>2$ is open. However, there is some hope for answering this question in case $1/p < \alpha < 2$ by proving a multivariate version of the main result in \cite[Thm.\ 4.7]{Ul12_3}, where Hammersley points have been used. In contrast to the Fibonacci lattice, which has certainly no proper counterpart in $d$ dimensions, this looks possible.
\end{remark}

\section{Cubature and sampling on Smolyak grids}
\label{smolyak}

In this section, we prove asymptotically sharp upper and lower bounds for the error of optimal cubature on Smolyak grids. Note 
that the degree of freedom in the cubature method reduces to the choice of the weights in \eqref{1.1}, the grid remains fixed. Recall the definition of the sparse Smolyak grid $G^d(m)$ given in \eqref{eq53}. It turns out that the upper bound can be obtained directly from results in \cite{Di11, SU07, SU11} on sampling recovery on $G^d(m)$ for $\UaG$. The lower bounds for both the errors of optimal sampling recovery and optimal cubature on $G^d(m)$ will be proved by constructing test functions similar to those constructed in the proof of Theorem \ref{Theorem[lower bound]}.

For a family $\Phi = \{\varphi_\xi\}_{\xi \in G^d(m)}$ of functions we define the linear sampling algorithm
$S_m(\Phi,\cdot)$ on Smolyak grids $G^d(m)$ by
\begin{equation*}
S_m(\Phi,f)
\ = \
\sum_{\xi \in G^d(m)} f(\xi) \varphi_\xi\,.
\end{equation*}
Let us introduce the quantity of optimal sampling recovery $r^s_n(F_d)_q$ on Smolyak grids $G^d(m)$ with respect to the function class $F_d$ by
\begin{equation} \label{def:r^s_n}
r^s_n(F_d)_q
\ := \ \inf_{|G^d(m)| \le n, \, \Phi} \  \sup_{f \in F_d} \, \|f - S_m(\Phi,f)\|_q.
\end{equation}
The upper index $s$ indicates that we restrict to Smolyak grids here. 

\begin{theorem} \label{Theorem:UpperBound[r^s_n]}
Let $\ 0 < p, q, \theta \le \infty$ and $1/p < \alpha$. Then we have for $p \ge q$,
\begin{equation*}
r^s_n(\UaG)_q
 \lesssim
\begin{cases}
(n^{-1} \log^{d-1}n)^\alpha, \ & \theta \le \min\{q,1\}, \\[1.5ex]
(n^{-1} \log^{d-1}n)^\alpha (\log^{d-1}n)^{ 1/q - 1/\theta}, \ & \theta > \min\{q,1\}, \ q \le 1, \\[1.5ex]
(n^{-1} \log^{d-1}n)^\alpha (\log^{d-1}n)^{ 1 - 1/\theta}, \ & \theta > \min\{q,1\}, \ q > 1,
\end{cases}
\end{equation*}
and for $p < q$,
\begin{equation*}
r^s_n(\UaG)_q
\lesssim
\begin{cases}
(n^{-1} \log^{d-1}n)^{\alpha - 1/p + 1/q}(\log^{d-1}n)^{(1/q - 1/\theta)_+}, \ & q < \infty, \\[1.5ex]
(n^{-1} \log^{d-1}n)^{\alpha - 1/p}(\log^{d-1}n)^{(1 - 1/\theta)_+}, \ &  q = \infty.
\end{cases}
\end{equation*}
\end{theorem}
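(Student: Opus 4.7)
The plan is to construct a near-optimal linear sampling operator on $G^d(m)$ by hierarchical B-spline quasi-interpolation, following \cite{Di11, SU07, SU11}. Fix a B-spline order $r$ with $r > \alpha$ and $r - 1 + 1/p > \alpha$, and to each level $k \in \ZZdp$ attach a quasi-interpolant $Q_k \colon C(\IId) \to \Sigma^d(k)$ which reproduces coordinate-wise polynomials of degree less than $r$ and uses only samples of $f$ at the tensor dyadic lattice $I_{k_1}\times\cdots\times I_{k_d}$. A standard telescoping construction turns the $Q_k$ into hierarchical increments $q_k(f) \in \Sigma^d(k)$ with $f = \sum_k q_k(f)$ in $C(\IId)$ and such that $S_m(f) := \sum_{|k|_1 \le m} q_k(f)$ depends only on values of $f$ at the Smolyak grid $G^d(m)$. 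The core fact, essentially the converse of Lemma~\ref{lemma[InverseRepresTheorem]} established in \cite{Di11}, is the two-sided equivalence
\begin{equation*}
\|f\|_{\BaG} \ \asymp \ \Big(\sum_{k\in\ZZdp} 2^{\alpha\theta|k|_1}\|q_k(f)\|_p^\theta\Big)^{1/\theta},
\end{equation*}
so that the sequence $a_k := 2^{\alpha|k|_1}\|q_k(f)\|_p$ satisfies $\|a\|_{\ell_\theta} \asymp \|f\|_{\BaG}$.

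With this in hand, the error $f - S_m(f) = \sum_{|k|_1 > m} q_k(f)$ is estimated in $L_q$ by combining three ingredients: the quasi-triangle inequality $\|\sum g_k\|_q^{\bar q} \le \sum\|g_k\|_q^{\bar q}$ with $\bar q := \min\{q,1\}$; the Nikol'skii inequality $\|g_k\|_q \lesssim 2^{|k|_1(1/p - 1/q)_+}\|g_k\|_p$ valid for $g_k \in \Sigma^d(k)$; and either the embedding $\ell_\theta \hookrightarrow \ell_s$ (when $\theta$ is below the summation exponent $s$, yielding the log-free case) or H\"older's inequality (when $\theta$ is above it, yielding a log factor $m^{(d-1)(1/s - 1/\theta)}$). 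Together with the counting $\#\{k : |k|_1 = \ell\} \asymp \ell^{d-1}$ and the volume identity $|G^d(m)| \asymp 2^m m^{d-1}$, which translates into $2^{-m} \asymp n^{-1}(\log n)^{d-1}$, this produces the upper bound $2^{-\alpha_\ast m}\, m^{(d-1)\gamma}$ with effective smoothness $\alpha_\ast := \alpha - (1/p - 1/q)_+$ and log exponent $\gamma$ determined by the position of $\theta$ relative to $\bar q$ in the range $p \ge q$ and relative to $q$ in the range $p < q$.

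The five listed bounds then follow by bookkeeping. For $p \ge q$, where $\alpha_\ast = \alpha$, one obtains $\gamma = 0$ when $\theta \le \bar q$, $\gamma = 1/q - 1/\theta$ when $q \le 1$ and $\theta > q$, and $\gamma = 1 - 1/\theta$ when $q > 1$ and $\theta > 1$. For $p < q$ the naive per-block argument sketched above yields only $\gamma = (1 - 1/\theta)_+$ in the regime $1 < q < \infty$, whereas the theorem claims the strictly sharper $\gamma = (1/q - 1/\theta)_+$. The main obstacle is precisely this refinement: one cannot discard the structural information within a single level $|k|_1 = \ell$, and must instead apply Nikol'skii on the whole hyperbolic-cross block $g_\ell := \sum_{|k|_1 = \ell} q_k(f)$ before summing over $\ell > m$ --- concretely, using $\|g_\ell\|_q \lesssim 2^{\ell(1/p - 1/q)}\|g_\ell\|_p$ together with an almost-orthogonality estimate bounding $\|g_\ell\|_p$ (essentially) by the $\ell_p$-sum of the $\|q_k(f)\|_p$ for $|k|_1 = \ell$, and only then summing over $\ell$ via H\"older in $\ell_\theta$. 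All other ingredients --- stability of the $Q_k$ for $0 < p \le \infty$ (including $p < 1$), the converse decomposition of \cite{Di11}, and the hyperbolic-cross Nikol'skii inequality --- are in place in the cited literature, so once the level-wise argument is set up the remaining work is mechanical.
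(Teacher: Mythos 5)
Your overall strategy is the right one, and it is in fact the only ``proof'' the paper offers: the authors dispose of this theorem in two lines by citing \cite{Di11}, so what you are really doing is reconstructing the argument of that reference. Your reconstruction is correct and complete for $p\ge q$ (where $\min\{q,1\}$-quasi-triangle inequality, $\|\cdot\|_q\le\|\cdot\|_p$, and $\ell_\theta$-embedding versus H\"older give exactly the three listed exponents), for $p<q\le 1$ (where $\bar q=q$ and the same computation already produces $(1/q-1/\theta)_+$), and for $q=\infty$ (Nikol'skii into $L_\infty$ plus H\"older gives $(1-1/\theta)_+$). You also correctly locate the only hard case, $p<q$ with $1<q<\infty$.

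However, the repair you propose for that case does not work. First, the ``almost-orthogonality'' inequality $\|\sum_{|k|_1=\ell}q_k(f)\|_p\lesssim(\sum_{|k|_1=\ell}\|q_k(f)\|_p^p)^{1/p}$ is false for $p>1$ as a statement about arbitrary $g_k\in\Sigma^d(k)$: taking all $g_k$ on the level equal to the same nonnegative spline gives left-hand side $\asymp\ell^{d-1}$ against right-hand side $\asymp\ell^{(d-1)/p}$. Second, even if one grants such a bound, aggregating in $\ell_p$ within each level and then applying H\"older over $\ell$ produces the exponent $(1/p-1/\theta)_+$, not $(1/q-1/\theta)_+$; a test sequence supported on a single level $|k|_1=m+1$ with all $a_k=2^{\alpha|k|_1}\|q_k(f)\|_p$ equal shows your route cannot do better than $m^{(d-1)/p}$ where the theorem needs $m^{(d-1)/q}$. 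The ingredient actually needed (and used in \cite{Di11}) is a sharp embedding-type inequality with inner exponent $q$ applied \emph{jointly} over all $k$, namely
\begin{equation*}
\Big\|\sum_{k}g_k\Big\|_q\ \lesssim\ \Big(\sum_{k}\big[2^{(1/p-1/q)|k|_1}\|g_k\|_p\big]^{q}\Big)^{1/q},
\qquad 0<p<q<\infty,\ g_k\in\Sigma^d(k),
\end{equation*}
the mixed-smoothness analogue of $B^{1/p-1/q}_{p,q}\hookrightarrow L_q$; applying H\"older with exponent $\theta/q$ to the resulting sum over $\{|k|_1>m\}$ then yields $2^{-(\alpha-1/p+1/q)m}m^{(d-1)(1/q-1/\theta)_+}$ as required. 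Since this inequality is neither stated nor derivable from the tools you invoke, the case $p<q$, $1<q<\infty$ remains a genuine gap in your proposal.
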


\begin{proof}
This theorem has been proved in \cite{Di11} for $\GGd = \IId$. A slight modification proves the result also for $\GGd = \TTd$.
\end{proof}

The following theorem establishes lower bounds for $r^s_n(\UaG)_q$.

\begin{theorem} \label{Theorem:LowerBound[r^s_n]}
Let $\ 0 < p, q, \theta \le \infty$,  and $\alpha > 1/p$. Then we have for $p \ge q$,
\begin{equation*}
r^s_n({\UaG})_q
 \ \gtrsim \
(n^{-1} \log^{d-1}n)^\alpha (\log^{d-1}n)^{(1 - 1/\theta)_+}, 
\end{equation*}
and for $p < q$,
\begin{equation*}
r^s_n(\UaG)_q
 \ \gtrsim \
(n^{-1} \log^{d-1}n)^{\alpha - 1/p + 1/q}(\log^{d-1}n)^{(1/q - 1/\theta)_+}.
\end{equation*}
\end{theorem}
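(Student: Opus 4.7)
The plan is to mimic the proof of Theorem \ref{Theorem[lower bound]}, replacing an arbitrary $n$-point set by a Smolyak grid $G^d(m)$ with $m$ chosen maximal subject to $|G^d(m)|\le n$, so that $n \asymp 2^m m^{d-1}$ and $(n^{-1}\log^{d-1}n)^{\alpha}\asymp 2^{-\alpha m}$. Since any linear sampling algorithm $S_m(\Phi,\cdot)$ returns zero on functions vanishing at the entire grid, one has $r^s_n(\UaG)_q \ge \sup\{\|f\|_q : f\in\UaG,\ f|_{G^d(m)}=0\}$, and the task reduces to producing test functions $f$ with large $L_q$-norm.

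The first step is the key geometric observation that for every $k\in\ZZdp$ with $|k|_1 = m+1$ and every $s\in S^d(k)$, the open cube $I_{k,s}$ contains no point of $G^d(m)$. Indeed, a grid point $\xi$ can be written as $\xi_j = 2^{-\ell_j}s'_j$ with minimal $\ell_j$ satisfying $|\ell|_1 \le m$, and $\xi$ is interior to $I_{k,s}$ only if $\ell_j > k_j$ for every $j$, forcing $|\ell|_1 \ge |k|_1 + d = m+1+d$, which contradicts $|\ell|_1 \le m$. Hence the B-splines $\tilde{g}_{k,s}$ introduced in the proof of Theorem \ref{Theorem[lower bound]}, which are supported in $\overline{I_{k,s}}$ and vanish on $\partial I_{k,s}$, vanish on the entire Smolyak grid, so any linear combination of them is an admissible test function.

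In the case $p\ge q$, $\theta\ge 1$, I would reuse the function $g^* = C\,2^{-\alpha m} m^{-(d-1)/\theta}\sum_{|k|_1=m+1}\sum_{s\in S^d(k)}\tilde{g}_{k,s}$ from the cubature lower bound; Lemma \ref{lemma[InverseRepresTheorem]} still yields $\|g^*\|_{\Ba}\lesssim C$. Upgrading the $L_1$-estimate from Theorem \ref{Theorem[lower bound]} to an $L_q$-estimate needs a pointwise lower bound: fix $[a,b]\subset(0,r2^{-\nu})$ with $g\ge c_0 > 0$ on $[a,b]$ and set $K(x) := \#\{k : |k|_1=m+1,\ \{2^{k_j}x_j\}\in [a,b]\ \forall j\}$. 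A Fubini computation gives $\int_{\IId} K(x)\,dx \asymp m^{d-1}$, while trivially $K(x)\le \#\{k:|k|_1 = m+1\}\asymp m^{d-1}$; a reverse Markov argument then produces a set of positive measure on which $K(x)\gtrsim m^{d-1}$, hence $g^*(x)\gtrsim 2^{-\alpha m} m^{(d-1)(1-1/\theta)}$ there, giving the desired bound for $\|g^*\|_q$. For $\theta<1$ the exponent $(1-1/\theta)_+$ vanishes, and I would use the single-level function $g_k = C\,2^{-\alpha m}\sum_{s}\tilde{g}_{k,s}$ (with $|k|_1=m+1$), whose summands have pairwise disjoint supports, so that $\|g_k\|_q\asymp 2^{-\alpha m}$.

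In the case $p<q$, $\theta\le q$ a single B-spline $f = c\,\tilde{g}_{k,s}$ with $|k|_1 = m+1$ already works: Lemma \ref{lemma[InverseRepresTheorem]} gives $\|f\|_{\Ba}\lesssim c\cdot 2^{(\alpha-1/p)m}$ and $\|f\|_q\asymp c\cdot 2^{-m/q}$, so after normalising $\|f\|_q\asymp 2^{-(\alpha-1/p+1/q)m}$. For $\theta>q$ I would use $f = c\sum_{|k|_1 = m+1}\tilde{g}_{k,s_k}$ with translations $s_k \in S^d(k)$ selected so that the boxes $\{I_{k,s_k}\}$ are pairwise disjoint; Lemma \ref{lemma[InverseRepresTheorem]} then yields $\|f\|_{\Ba}\lesssim c\,m^{(d-1)/\theta}\,2^{(\alpha-1/p)m}$, while disjointness gives $\|f\|_q^q\asymp c^q\,m^{d-1}\,2^{-m}$, leading after normalisation to $\|f\|_q \gtrsim 2^{-(\alpha-1/p+1/q)m} m^{(d-1)(1/q-1/\theta)}$. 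The main obstacle will be this disjoint packing step: one must place $\asymp m^{d-1}$ anisotropic dyadic boxes of common volume $2^{-(m+1)}$ disjointly inside $\IId$. Since the total volume $\asymp m^{d-1}\cdot 2^{-m}$ is vanishingly small compared with $1$, I expect this to be achievable by an explicit staircase-type layout adapted to the varying aspect ratios, or alternatively by a probabilistic argument choosing each $s_k$ uniformly in $S^d(k)$ and then invoking a concentration estimate; in either case the combinatorial bookkeeping is the most delicate step of the whole proof.
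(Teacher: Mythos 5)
Your proposal is correct and follows essentially the same route as the paper: the reduction of $r^s_n(\UaG)_q$ to $\sup\{\|f\|_q : f\in\UaG,\ f|_{G^d(m)}=0\}$ with $m$ maximal subject to $|G^d(m)|\le n$, the same four B-spline test functions built from the $\tilde{g}_{k,s}$ of Theorem \ref{Theorem[lower bound]} at level $|k|_1=m+1$ (the paper uses $|k|_1=m$, which changes nothing), and the same divisibility argument showing these vanish on the Smolyak grid; your reverse-Markov bound on the counting function $K(x)$ is a careful way of justifying the $L_q$ estimate for the full-sum test function that the paper states only tersely. The one step you leave open, the disjoint packing in the case $p<q$, $\theta>q$, is not actually an obstacle: take $s(k)=0$ for every $k$ and fix $[a,b]\subset(0,r2^{-\nu})$ with $b<2a$ and $g\ge c_0$ on $[a,b]$; then the ``cores'' $\prod_{j=1}^{d}[a2^{-k_j},b2^{-k_j}]$, on which $\tilde{g}_{k,0}\ge c_0^d$, are pairwise disjoint for distinct $k$ with $|k|_1=m+1$ (in any coordinate with $k_j<k'_j$ one has $b2^{-k'_j}<a2^{-k_j}$), and disjointness of these cores, each of measure $\asymp 2^{-m}$, is all that the $L_q$ lower bound requires.
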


\begin{proof} Clearly, it is enough to prove the theorem for $\GGd = \TTd$.
Observe that
\begin{equation} \label{[r^s_n>]}
r^s_n(F_d)_q
\ \ge \ \inf_{|G^d(m)| \le n} \
\sup_{f \in F_d: \ f(\xi) = 0, \ \xi \in G^d(m)} \, \|f \|_q.
\end{equation}
We will use the sets $S^d(k)$, $I_{k,s}$ and the periodic functions $\tilde{g}_{k,s}$ constructed in the proof of Theorem \ref{Theorem[lower bound]} (see \eqref{[g_{k,s}(d=1)]}--\eqref{[I_{k,s}]} and the following definition of $\tilde{g}_{k,s}$). In particular, we have
\begin{equation} \nonumber
\tilde{g}_{k,s}(x):=  \ \prod_{j=1}^d \tilde{g}_{k_j, s_j}( x_j),
\ k \in {\ZZ}^d_+, \ s \in {\ZZ}^d,
\end{equation}
and
\begin{equation} \nonumber
I_{k,s} \ = \ \prod_{j=1}^d I_{k_j,s_j},
\end{equation}
where the univariate functions $\tilde{g}_{k_j, s_j}( x_j)$ is nonnegative in $\II$ and vanishes in
$\II \setminus I_{k_j,s_j}$. Let $m$ be an arbitrary integer such that $|G^d(m)| \le n$. Without loss of generality we can assume that $m$ is the maximum among such numbers. We have
\begin{equation} \label{asymp:[2^m]}
2^m
\ \asymp \ n (\log n)^{-(d-1)}.
\end{equation}
Put $D(m):= \{(k,s): k \in \ZZdp, \ |k|_1 = m, \ s \in S^d(k)\}$. We prove that $\tilde{g}_{k,s}(\xi) = 0$ for every $(k,s) \in D(m)$ and $\xi \in G^d(m)$. Indeed, $(k,s) \in D(m)$ and $\xi = 2^{-k'}s' \in G^d(m)$, then there is $j \in [d]$ such that $k_j \ge k'_j$. Hence, by the construction we have $\tilde{g}_{k_j, s_j}(2^{-k'_j}s'_j) =0$, and consequently, $\tilde{g}_{k,s}(2^{-k'}s') = 0$.

Moreover,  if $0 < \nu \le \infty$, for $(k,s) \in D(m)$,
\begin{equation} \label{asymp:Norm[g_{k,s}]}
\|\tilde{g}_{k,s}\|_\nu
\ \asymp \ 2^{- m/\nu} ,
\end{equation}
with the change to sup when $\nu = \infty$, and
\begin{equation}  \label{asymp:NormSum[g_{k,s}]}
\Big\|\sum_{s \in S^d(k)} \tilde{g}_{k,s}\Big\|_\nu
\ \asymp \ 1.
\end{equation}

Consider the functions
\begin{equation} \label{def:phi_1}
\varphi_1
\ := \
C_1 2^{-\alpha m} \sum_{s \in S^d({\bar k})} \tilde{g}_{{\bar k}, s}
\end{equation}
for some ${\bar k}$ with $|{\bar k}| = m$, and
\begin{equation} \label{def:phi_2}
\varphi_2
\ := \
C_2 2^{- \alpha m} m^{-(d-1)/\theta} \sum_{(k,s) \in D(m)}\tilde{g}_{k,s}.
\end{equation}
By Lemma \ref{lemma[InverseRepresTheorem]}
and \eqref{asymp:NormSum[g_{k,s}]}
we can choose constants $C_i$ so that
$\varphi_i \in \UaT$ for all $m \ge 1$ and $i=1,2$. By the construction we have
 $\varphi_i(\xi) = 0, \ i=1,2$, for every $\xi \in G^d(m)$.
We have by \eqref{asymp:[2^m]} -- \eqref{asymp:NormSum[g_{k,s}]}
\begin{equation} \label{[|varphi_1|_q >]}
r^s_n(\UaT)_q
\  \ge \ \|\varphi_1\|_q
\  \gtrsim \
 2^{-\alpha m}
\ \asymp \
 (n^{-1} \log^{d-1}n)^\alpha
\end{equation}
if $\theta \le 1$, and
\begin{equation} \label{[|varphi_2|_q >]}
\begin{split}
r^s_n(\UaT)_q
\  &\ge \ \|\varphi_2\|_q \ \ge \|\varphi_2\|_{q^*}
\ \gtrsim \
  2^{-\alpha m} m^{(d-1)(1 - 1/\theta)} \\
\ &\asymp \
 (n^{-1} \log^{d-1}n)^\alpha(\log^{d-1}n)^{1 - 1/\theta}
\end{split}
\end{equation}

We take the functions
\begin{equation} \label{def:g_3}
\varphi_3
\ = \
C_3 2^{-(\alpha - 1/p)m} \tilde{g}_{k^*,s*}
\end{equation}
with some $(k^*,s^*) \in D(m)$, and
\begin{equation} \label{def:g_4}
\varphi_4
\ = \
C_4 2^{-(\alpha - 1/p)m} m^{-(d-1)/\theta} \sum_{|k|_1 = m} \tilde{g}_{k,s(k)}
\end{equation}
with some $s(k) \in S^d(k)$.
Similarly to the functions $\varphi_i, \ i =1,2$,  we can choose constants $C_i$ so that
$\varphi_i \in \UaT$, $i=3,4$, by the construction we have
 $\varphi_i(\xi) = 0, \, i=3,4$, for every $\xi \in G^d(m)$.
We have by \eqref{asymp:[2^m]} and \eqref{asymp:Norm[g_{k,s}]}
\begin{equation*}
r^s_n(\UaT)_q
\  \ge \ \|\varphi_3\|_q
\  \gtrsim \
 2^{-(\alpha - 1/p + 1/q)m}
 \ \asymp \
(n^{-1} \log^{d-1}n)^{\alpha - 1/p + 1/q}
\end{equation*}
if $\theta \le q$, and
\begin{equation*}
\begin{split}
r^s_n(\UaT)_q
\  &\ge \ \|\varphi_4\|_q
\  \gtrsim \
 2^{-(\alpha - 1/p + 1/q)m} m^{(d-1)(1/q - 1/\theta)} \\
 \ &\asymp \
(n^{-1} \log^{d-1}n)^{\alpha - 1/p + 1/q}(\log^{d-1}n)^{1/q - 1/\theta}
\end{split}
\end{equation*}
if $\theta > q$.
\end{proof}

\noindent Putting together Theorems \ref{Theorem:UpperBound[r^s_n]} and \ref{Theorem:LowerBound[r^s_n]} we obtain the asymptotically sharp error bounds for optimal sampling on Smolyak grids for $\UaG$.
\begin{corollary} \label{Corollary:Asymp}
Let $\ 0 < p, q, \theta \le \infty$, and $\alpha > 1/p$. Then we have for $p \ge q$,
\begin{equation*}
r^s_n(\UaG)_q
 \ \asymp \
\begin{cases}
(n^{-1} \log^{d-1}n)^\alpha, \, \ & \theta \le \min\{q,1\}, \\[1.5ex]
(n^{-1} \log^{d-1}n)^\alpha (\log^{d-1}n)^{(1 - 1/\theta)_+}, \ & \theta > 1, q \ge 1,
\end{cases}
\end{equation*}
and for $p < q < \infty$,
\begin{equation*}
r^s_n(\UaG)_q
 \ \asymp \
(n^{-1} \log^{d-1}n)^{\alpha - 1/p + 1/q}(\log^{d-1}n)^{(1/q - 1/\theta)_+}.
\end{equation*}
\end{corollary}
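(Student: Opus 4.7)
The plan is straightforward: the corollary is a direct consequence of Theorems \ref{Theorem:UpperBound[r^s_n]} and \ref{Theorem:LowerBound[r^s_n]}, so the work reduces to a case analysis verifying that the upper bound of the former and the lower bound of the latter coincide in each parameter regime listed. No new technical tool is required; the main point to get right is the bookkeeping of the $(\cdot)_+$ exponents and of the sub-case splits in Theorem \ref{Theorem:UpperBound[r^s_n]}.

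First I would handle the range $p \ge q$ with $\theta \le \min\{q, 1\}$. The first sub-case of Theorem \ref{Theorem:UpperBound[r^s_n]} supplies the upper bound $(n^{-1}\log^{d-1}n)^\alpha$. Since $\theta \le 1$, we have $(1 - 1/\theta)_+ = 0$, so the lower bound from Theorem \ref{Theorem:LowerBound[r^s_n]} reduces to the same expression, giving the claimed asymptotic.

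Next I would address the range $p \ge q$, $\theta > 1$, $q \ge 1$. Since $\theta > 1 \ge \min\{q,1\}$, the upper bound is delivered either by the second sub-case of Theorem \ref{Theorem:UpperBound[r^s_n]} (when $q = 1$) with exponent $1/q - 1/\theta = 1 - 1/\theta$, or by the third sub-case (when $q > 1$) with exponent $1 - 1/\theta$; in both cases the upper bound is $(n^{-1}\log^{d-1}n)^\alpha(\log^{d-1}n)^{1 - 1/\theta}$. Because $\theta > 1$ forces $(1 - 1/\theta)_+ = 1 - 1/\theta$, Theorem \ref{Theorem:LowerBound[r^s_n]} produces the matching lower bound.

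Finally, for $p < q < \infty$, the $q < \infty$ sub-case of Theorem \ref{Theorem:UpperBound[r^s_n]} and the $p < q$ part of Theorem \ref{Theorem:LowerBound[r^s_n]} are already stated in identical form, namely $(n^{-1}\log^{d-1}n)^{\alpha - 1/p + 1/q}(\log^{d-1}n)^{(1/q - 1/\theta)_+}$, so nothing further is needed. I do not expect any genuine obstacle: the corollary is literally the intersection of the two preceding theorems, and the only subtlety is being careful to observe that the sub-case $q = 1$ in Theorem \ref{Theorem:UpperBound[r^s_n]} (which nominally sits inside ``$q \le 1$'') produces the same logarithmic power as the $q > 1$ branch, so that both feed cleanly into the unified statement of the corollary.
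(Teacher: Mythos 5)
Your proposal is correct and matches the paper exactly: the paper derives Corollary \ref{Corollary:Asymp} by simply combining Theorems \ref{Theorem:UpperBound[r^s_n]} and \ref{Theorem:LowerBound[r^s_n]}, with no further argument. Your case bookkeeping (including the observation that the $q=1$ sub-case of the upper bound yields the exponent $1-1/\theta$ and so merges with the $q>1$ branch) is accurate and is precisely the verification the paper leaves implicit.
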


\begin{remark}
\rm This corollary has been proved in \cite{Di11} for $1/p < \alpha < 2$ and $\GGd = \IId$.
\end{remark}

\noindent Let us now construct associated cubature formulas. For a family $\Phi = \{\varphi_\xi\}_{\xi \in G^d(m)}$ in $\GGd$, the linear sampling algorithm $S_m(\Phi,\cdot)$  
generates the cubature formula $\Lambda^s_m(f)$ on Smolyak grid $G^d(m)$ by
\begin{equation} \label{def[Lambda^s_m(f)(1)]}
\Lambda^s_m(f)
\ = \
\sum_{\xi \in G^d(m)} \lambda_\xi f(\xi),
\end{equation}
where the vector $\Lambda_m$ of integration weights is given by 
\begin{equation} \label{def[Lambda^s_m(f)(2)]}
\Lambda_m = (\lambda_\xi)_{\xi \in G^d(m)}, \quad \lambda_\xi = \int_{\IId} \varphi_\xi \ dx.
\end{equation}
Hence, it is easy to see that
\begin{equation*}
 |I(f) - \Lambda^s_m(f)|
 \ \le \
 \|f - S_m(\Phi,f)\|_1,
\end{equation*}
and, as a consequence of \eqref{def:r^s_n} and \eqref{def:Int^s_n},
\begin{equation} \label{[i_n<r_n]}
\Int^s_n(F_d)
\ \le \
r^s_n(F_d)_1.
\end{equation}

\noindent The following theorem represents the main result of this section. It states the correct asymptotic of the error of optimal cubature 
on Smolyak grids for $\UaG$. 

\begin{theorem} \label{Corollary:Asump[Int^s_n]}
Let $\ 0 < p, q, \theta \le \infty$ and $\alpha>1/p$.
Then we have
\begin{equation*}
\Int^s_n(\UaG)
\ \asymp  \
n^{-\alpha} (\log^{d-1} n)^{\alpha + (1 - 1/\theta)_+}.
\end{equation*}
\end{theorem}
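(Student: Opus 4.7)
The proof splits into upper and lower bounds. For the upper bound I plan to use the observation made in \eqref{def[Lambda^s_m(f)(1)]}--\eqref{def[Lambda^s_m(f)(2)]}: every family $\Phi = \{\varphi_\xi\}_{\xi \in G^d(m)}$ of sampling basis functions gives rise to a Smolyak cubature through $\lambda_\xi = \int_{\IId} \varphi_\xi\,dx$, and the resulting error is controlled by $\|f - S_m(\Phi,f)\|_1$. This yields the inequality $\Int^s_n(\UaG) \le r^s_n(\UaG)_1$ recorded in \eqref{[i_n<r_n]}, so it suffices to insert $q=1$ into Corollary~\ref{Corollary:Asymp}. In the range $p \ge 1$, the $p \ge q$ branch there with $q=1$ produces exactly
$$n^{-\alpha}(\log^{d-1} n)^{\alpha + (1-1/\theta)_+},$$
which is the desired upper bound.

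For the lower bound I would exploit that if $f$ vanishes on every point of $G^d(m)$, then $\Lambda_m^s(f) = 0$ irrespective of the weights, so the cubature error collapses to $|I(f)|$. Hence
$$\Int^s_n(\UaG) \ \ge\ \inf_{|G^d(m)|\le n}\ \sup_{\substack{f \in \UaG\\ f(\xi)=0,\,\xi\in G^d(m)}} |I(f)|.$$
I would reuse the test functions $\varphi_1$ from \eqref{def:phi_1} and $\varphi_2$ from \eqref{def:phi_2} built in the proof of Theorem~\ref{Theorem:LowerBound[r^s_n]}; both are already shown there to belong to $\UaT$ and to vanish on $G^d(m)$. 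The only new step is to replace the $L_q$ estimates used there by the integral. Since each $\tilde g_{k,s}$ is a non-negative bump supported on the disjoint cube $I_{k,s}$ with $I(\tilde g_{k,s}) \asymp 2^{-|k|_1}$, straightforward summation gives $|I(\varphi_1)| \asymp 2^{-\alpha m}$ and $|I(\varphi_2)| \asymp 2^{-\alpha m} m^{(d-1)(1-1/\theta)}$. Using $2^m \asymp n(\log n)^{-(d-1)}$ these become $n^{-\alpha}(\log n)^{(d-1)\alpha}$ (applied when $\theta \le 1$) and $n^{-\alpha}(\log n)^{(d-1)(\alpha + 1 - 1/\theta)}$ (applied when $\theta > 1$), which together reproduce the factor $\alpha + (1-1/\theta)_+$ in the logarithmic exponent.

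The main obstacle I anticipate is the upper bound in the regime $p < 1$: Corollary~\ref{Corollary:Asymp} in the $p < q$ branch with $q=1$ only yields the weaker rate $(n^{-1}\log^{d-1}n)^{\alpha - 1/p + 1}$, so the naive passage through $r^s_n(\cdot)_1$ is lossy. To recover the claimed $n^{-\alpha}$ behaviour here, one has to either sharpen the sampling argument by exploiting that integration is a single bounded linear functional rather than $L_1$-recovery in every direction, or bypass $r^s_n(\cdot)_1$ entirely and build the Smolyak cubature directly from a B-spline quasi-interpolation operator, bounding the tail $\sum_{|k|_1 > m}\sum_s c_{k,s}(f)\,I(N_{k,s})$ through $I(N_{k,s}) \asymp 2^{-|k|_1}$ combined with the Besov-norm characterisation of Lemma~\ref{lemma[InverseRepresTheorem]}. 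The lower bound is by contrast sharp through $\varphi_1$ and $\varphi_2$ for the full range $p > 0$.
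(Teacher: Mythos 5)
Your argument coincides with the paper's own proof of this theorem: the upper bound is obtained from $\Int^s_n(\UaG)\le r^s_n(\UaG)_1$ (inequality \eqref{[i_n<r_n]}) combined with Theorem~\ref{Theorem:UpperBound[r^s_n]} at $q=1$, and the lower bound from the reduction \eqref{[Int^s_n>]} applied to the test functions $\varphi_1,\varphi_2$ of \eqref{def:phi_1}--\eqref{def:phi_2}, using that $|I(\varphi_i)|=\|\varphi_i\|_1$ because the bumps are nonnegative and disjointly supported. The one place where you go beyond the paper is your concern about $p<1$, and it is well-founded: the paper cites Theorem~\ref{Theorem:UpperBound[r^s_n]} without comment, but for $p<1=q$ that theorem only gives the weaker rate $(n^{-1}\log^{d-1}n)^{\alpha-1/p+1}(\log^{d-1}n)^{(1-1/\theta)_+}$, so the paper's own proof has exactly the loss you identified. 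Note, however, that neither of your two proposed repairs can succeed, because the obstruction sits in the statement rather than in the upper-bound argument: the single-bump function $\varphi_3=C_3 2^{-(\alpha-1/p)m}\tilde{g}_{k^*,s^*}$ of \eqref{def:g_3} belongs to $\UaT$, vanishes on $G^d(m)$, is nonnegative, and satisfies $|I(\varphi_3)|=\|\varphi_3\|_1\asymp 2^{-(\alpha-1/p+1)m}\asymp (n^{-1}\log^{d-1}n)^{\alpha-1/p+1}$, which for $p<1$ eventually exceeds $n^{-\alpha}(\log^{d-1}n)^{\alpha+(1-1/\theta)_+}$ since $n^{1/p-1}\to\infty$ beats any power of $\log n$. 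Hence for $p<1$ the optimal cubature error on Smolyak grids is genuinely of the order of $r^s_n(\UaG)_1$, the displayed asymptotic can only hold for $p\ge 1$, and your proof (like the paper's) is complete and correct precisely in that range.
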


\begin{proof}
The upper bound is derived from \eqref{[i_n<r_n]} and Theorem \ref{Theorem:UpperBound[r^s_n]} together with 
\eqref{def[Lambda^s_m(f)(1)]}--\eqref{def[Lambda^s_m(f)(2)]} and \eqref{[i_n<r_n]}. To prove the lower bound we employ the inequality
\begin{equation} \label{[Int^s_n>]}
\Int^s_n(F_d)_q
\ \ge \ \inf_{|G^d(m)| \le n} \
\sup_{f \in F_d: \ f(\xi) = 0, \ \xi \in G^d(m)} \, |I(f)|.
\end{equation}
Consider the functions  $\varphi_i, \ i=1,2$, as defined as in \eqref{def:phi_1} and \eqref{def:phi_2}. We have
$|I(\varphi_i)| = \|\varphi_i\|_1, \ i=1,2$. Hence, by \eqref{[Int^s_n>]}, \eqref{[|varphi_1|_q >]} and \eqref{[|varphi_2|_q >]} for $q=1$ we obtain the lower bound.
\end{proof}


\begin{remark} \rm In case $d=2$ the lower bound in Theorem \ref{Corollary:Asump[Int^s_n]} is significantly larger than the bounds provided in Corollary \ref{cor1} for all $\alpha>1/p$. Therefore, cubature formulae based on Smolyak grids can never be optimal for $\Int_n(\Ua(\TT^2))$. We conjecture, that this is also the case in higher dimensions $d>2$. In fact, 
considering Markhasin's results \cite{Ma13_2,Ma13_1,MaDiss12} in combination with Theorem \ref{Corollary:Asump[Int^s_n]} verifies this conjecture in case of the smoothness $\alpha$ being less or equal to $1$. What happens in case $\alpha>1$ and $d>2$ is open. However, there is some hope for answering this question in case $1/p < \alpha < 2$ by proving a multivariate version of the main result in \cite{Ul12_3}. See also Remark \ref{Markh1} above. 
\end{remark}

\begin{remark} \rm
An asymptotically optimal cubature formula on the Smolyak grid is generated by the method described in \eqref{def[Lambda^s_m(f)(1)]}--\eqref{def[Lambda^s_m(f)(2)]} of the optimal sampling algorithm, which indeed exists, see \cite{Di11, SU07, SU11}. 
\end{remark}

\noindent
{\bf Acknowledgments.} The research of Dinh D\~ung is funded by Vietnam National Foundation for Science and Technology Development (NAFOSTED) under Grant No. 102.01-2012.15. Both authors would like to thank Aicke Hinrichs for useful comments and suggestions.


\begin{thebibliography}{10}

\bibitem{Am76}
T.~I. Amanov.
\newblock {\em Spaces of Differentiable Functions with Dominating Mixed
  Derivatives}.
\newblock Nauka Kaz. SSR, Alma-Ata, 1976.

\bibitem{Ba63}
N.~S. Bakhvalov.
\newblock Optimal convergence bounds for quadrature processes and integration
  methods of {M}onte {C}arlo type for classes of functions.
\newblock {\em Zh. Vychisl. Mat. i Mat. Fiz.}, 4(4):5--63, 1963.

\bibitem{BiTeYu11}
D.~Bilyk, V.~N. Temlyakov, and R.~Yu.
\newblock Fibonacci sets and symmetrization in discrepancy theory.
\newblock {\em J. Complexity}, 28(1):18--36, 2012.

\bibitem{BG04}
H.-J. Bungartz and M.~Griebel,
Sparse grids,
Acta Numer., {\bf 13}(2004), 147--269.


\bibitem{Di86}
D.~D\~ung.
\newblock Approximation of functions of several variables on a torus by
  trigonometric polynomials.
\newblock {\em Mat. Sb. (N.S.)}, 131(173)(2):251--271, 1986.

\bibitem{Di11}
D.~D{\~u}ng.
\newblock B-spline quasi-interpolant representations and sampling recovery of
  functions with mixed smoothness.
\newblock {\em J. Complexity}, 27(6):541--567, 2011.

\bibitem{Di13}
D.~D{\~u}ng.
Sampling and cubature on sparse grids based on a B-spline quasi-interpolation,  
http://arxiv.org/abs/1211.4319.

\bibitem{Hl62}
E.~Hlawka.
\newblock Zur angen\"aherten {B}erechnung mehrfacher {I}ntegrale.
\newblock {\em Monatsh. Math.}, 66:140--151, 1962.

\bibitem{Ko59}
N.~M. Korobov.
\newblock Approximate evaluation of repeated integrals.
\newblock {\em Dokl. Akad. Nauk SSSR}, 124:1207--1210, 1959.

\bibitem{Ma13_2}
L.~Markhasin.
\newblock Discrepancy of generalized {H}ammersley type point sets in {B}esov
  spaces with dominating mixed smoothness.
\newblock {\em Unif. Distr. Theory}, 8(1):135--164, 2013.

\bibitem{Ma13_1}
L.~Markhasin.
\newblock Quasi-{M}onte {C}arlo methods for integration of functions with
  dominating mixed smoothness in arbitrary dimension.
\newblock {\em J. Complexity}, 29(5):370--388, 2013.

\bibitem{MaDiss12}
L.~Markhasin.
\newblock Discrepancy and integration in function spaces with dominating mixed
  smoothness.
\newblock {\em Dissertationes Math. (Rozprawy Mat.)}, to appear.


\bibitem{ScTr87}
H.-J. Schmeisser and H.~Triebel.
\newblock {\em Topics in {F}ourier analysis and function spaces}.
\newblock A Wiley-Interscience Publication. John Wiley \& Sons Ltd.,
  Chichester, 1987.
  

\bibitem{SU07}
W.~Sickel and T.~Ullrich, The {S}molyak algorithm, sampling on
sparse grids and function spaces of dominating mixed smoothness,
East J. Approx. {\bf 13}(4)(2007), 387--425.

\bibitem{SU11}
W.~{S}ickel and T.~{U}llrich, Spline interpolation on sparse grids,
Applicable Analysis {\bf 90}(2011), 337-383.  


 \bibitem{Sm63}
S.A. Smolyak.
\newblock  Quadrature and interpolation formulae for tensor products of certain classes of functions
\newblock  {\em Dokl. Akad. Nauk}, 148:1042--1045, 1963.

\bibitem{Te90}
V.~N. Temlyakov.
\newblock A new way of obtaining a lower bound on errors in quadrature
  formulas.
\newblock {\em Mat. Sb.}, 181(10):1403--1413, 1990.

\bibitem{Te91}
V.~N. Temlyakov.
\newblock Error estimates for {F}ibonacci quadrature formulae for classes of
  functions with a bounded mixed derivative.
\newblock {\em Trudy Mat. Inst. Steklov.}, 200:327--335, 1991.

\bibitem{Te93}
V.~N. Temlyakov.
\newblock {\em Approximation of periodic functions}.
\newblock Computational Mathematics and Analysis Series. Nova Science
  Publishers Inc., Commack, NY, 1993.

\bibitem{Tr10}
H.~Triebel.
\newblock {\em Bases in function spaces, sampling, discrepancy, numerical
  integration}, volume~11 of {\em EMS Tracts in Mathematics}.
\newblock European Mathematical Society (EMS), Z\"urich, 2010.

\bibitem{TDiff06}
T.~{U}llrich.
\newblock {F}unction spaces with dominating mixed smoothness, characterization
  by differences.
\newblock {\em {J}enaer {S}chriften zur {M}athematik und {I}nformatik},
  Math/Inf/05/06:1--50, 2006.

\bibitem{Ul12_3}
T.~{U}llrich.
\newblock Optimal cubature in Besov spaces with dominating mixed smoothness on the unit square.
\newblock {J. Complexity}, to appear. 

\bibitem{Vy06}
J.~Vybiral.
\newblock Function spaces with dominating mixed smoothness.
\newblock {\em Dissertationes Math. (Rozprawy Mat.)}, 436:73, 2006.

\end{thebibliography}
\end{document}